\newtheorem{thm}{Theorem}[section]
\newtheorem{cor}[thm]{Corollary}
\newtheorem{lem}[thm]{Lemma}
\newtheorem{prop}[thm]{Proposition}
\theoremstyle{definition}
\newtheorem{defn}[thm]{Definition}
\theoremstyle{remark}
\newtheorem{rem}[thm]{Remark}
\newtheorem*{ex}{Example}
\numberwithin{equation}{section}
\newcommand{\noi}{\noindent}
\newcommand{\ra}{\rightarrow}
\newcommand{\hk}{\widehat{K}}
\newcommand{\hf}{\widehat{f}}
\begin{document}
	
	%
	%
	%
	%
	%
	%
	%
	%
	%

	\title[Equality in Hausdorff-Young for Hypergroups]{Equality in Hausdorff-Young  for Hypergroups}

	\author[C. Bandyopadhyay]{Choiti Bandyopadhyay}
	
	\address{%
		Department of Mathematics \& Statistics, Indian Institute of Technology Kanpur, India 208016.}
	
	\email{choiti@ualberta.ca}
	
	\author[P. Mohanty]{Parasar Mohanty}
	\address{Department of Mathematics \& Statistics, Indian Institute of Technology Kanpur, India 208016.}
	\email{parasar@iitk.ac.in}
	\subjclass{Primary 43A62, 43A15, 46E30; Secondary 46A03, 46B70, 47B38 }
	
	\keywords{hypergroups, uncertainty principle, equality in Hausdorff-Young, Hausdorff-Young inequality}
	
	
	\begin{abstract}
		It was shown in \cite{SD} that one can extend the domain of Fourier transform of a commutative hypergroup $K$ to $L^p(K)$ for $1\leq p \leq 2$, and the Hausdorff-Young inequality holds true for these cases. In this article, we examine the structure of non-zero functions in $L^p(K)$ for which equality is attained in the Hausdorff-Young inequality, for $1<p<2$, and further provide a characterization for the basic uncertainty principle for commutative hypergroups with non-trivial centre.
	\end{abstract}
	
	\maketitle

	
	\section{Introduction}
	
	In the classical setting, the  basic uncertainty principle simply states that a function $f$ and its Fourier transform $\widehat{f}$ cannot both be simultaneously concentrated on `small' sets. From elementary complex analysis, we know that if a non-trivial integrable function $f$ on $\mathbb{R}$ has compact support, then its Fourier transform $\widehat{f}$ can be extended to an entire function, and therefore  cannot have compact support. Hence roughly speaking, one can conclude that  for a function $f$ on $\mathbb{R}$, both $f$ and $\widehat{f}$  cannot be compactly supported.

	The situation changes when we address this problem in the setting of locally compact abelian groups. For example, if a commutative group $G$ is compact or discrete, then one can find continuous functions $f$ on $G$ such that both $f$ and $\widehat{f}$ would have compact support. Define the set
	\begin{eqnarray*}
		PW &:=&  \{ G : G \mbox{ is a locally compact abelian group, and there does not}\\
		& &  \mbox{ exist a non-trivial function } f \mbox{ on } G \mbox{ such that both } f \mbox{ and } \widehat{f}\\
		& & \mbox{ have compact support} \}.
	\end{eqnarray*}
	We say that a locally compact abelian group $G$ abides by the  basic uncertainty principle if $G\in PW$. As mentioned before, if an abelian group $G$ is compact or discrete, then $G\notin PW$. For a complete characterization of abelian groups which obey the { basic} uncertainty principle, one can look into the criteria for which equality is attained in the Hausforff-Young inequality: 
	\begin{equation}
	||\widehat{f}||_q \leq ||f||_p,\;\;{\rm for}\; f\in L^p(G), \; 1< p <2\; \mbox{  \ and \ }\; 1/p + 1/q=1.
	\label{HYI}
	\end{equation}  It turns out \cite{HH,HR2,LV}  that a complete characterization is possible in terms of the best constant achieved in the Hausdorff-Young inequality on $L^p(G)$, $1<p<2$.

	Next, it will be of interest to consider this problem in a more general setting of hypergroups. Roughly speaking, a hypergroup is a locally compact Hausdorff space with an associative measure algebra that admits an identity and a topological involution. As one would expect, a hypergroup can also be naturally perceived as a certain generalization of a locally compact group, where the product of two points may be a certain compact set, instead of a single point. In this sense, the category of locally compact groups is the most trivial example of the category of hypergroups. 
	
	The concept of hypergroups arises naturally in abstract harmonic analysis in terms of double-coset spaces and orbit spaces of certain affine actions of locally compact groups, solution spaces of partial differential equations and orthogonal polynomials in one and several variables, which appear frequently in different areas of research including Lie groups, double-coset spaces, dynamical systems and ordinary and partial differential equations, to name a few. 
	
	The fact that hypergroups contain more generalized objects arising from different fields of research than the classical group theory and yet sustains enough structure to allow an independent theory to develop, makes it an intriguing and useful area of study with essentially a broader area of applications. We define:
	\begin{eqnarray*}
		PW_H &:=&  \{ K : K \mbox{ is a commutative hypergroup, and there does not exist}\\
		& & \mbox{ a non-trivial function } f \mbox{ on } K \mbox{ such that both } f \mbox{ and } \widehat{f}\\
		& & \mbox{ have compact support} \}.
	\end{eqnarray*}
	We say that a commutative hypergroup $K$ abides by the  basic uncertainty principle if $K\in PW_H$. Note that the Hausdorff-Young inequality also holds true for hypergroups \cite{BH,JE,SD}. Hence it is of interest to investigate when and why the Hausdorff-Young inequality becomes sharp for a commutative hypergroup, and how it relates to the  basic uncertainty principle for the respective hypergroup.

	Equality in \Cref{HYI} was characterized  for locally compact abelian groups in \cite{HH}. In a follow-up paper \cite{H2}, this issue was addressed  for compact groups as well. In the Euclidean setting, Babenko (when $q$ is an even integer) \cite{B} and later Beckner \cite{Be} (for $1<p<2$)  found out the best constant in \Cref{HYI} for $G=\mathbb R^n$.  Russo \cite{R} further extended the result of Hewitt and Hirschman \cite{HH} to unimodular locally compact groups. In a recent paper \cite{CMP}, Cowling et.al. studied the best constant for certain  non-abelian groups as well. 
	
	The main goal of this article is to investigate when equality is attained in the Hausdorff-Young inequality for a commutative hypergroup, and thereby provide a certain characterization of commutative hypergroups which obey the basic uncertainty principle, in terms of the best constant attained in \Cref{HYI}. The rest of this article is organized as the following.
	
	In the next, \textit{i.e,} second section, we list and briefly discuss some of the basic notations, definitions and well-known facts regarding (topological) hypergroups that we will use throughout the article. 
	
	The third section is dedicated to the main results of the article. In this section, we first recall some well-known results that we would need for the rest of the article. We then proceed towards proving the first main result which states that for any commutative hypergroup $K$, if a non-trivial function $f\in L^p(K)$ attains equality in the Hausdorff-Young inequality for some $p\in (1, 2)$, then the supports of both $f$ and $\widehat{f}$ are compact open subsets of $K$ and $\hk$ respectively, \textit{i.e.} in particular, $K\notin PW_H$. This further provides us with useful insights into the specific structure of such an $f$, and specify where and why this theory deviates from the classical theory of locally compact abelian groups. With this in mind, we test equality in Hausdorff-Young with the first obvious candidate for such an $f$, which is a certain characteristic function. 
	
	We see that Fourier transforms of such characteristic functions on $K$ are supported on annihilators, which forces both the function and its Fourier transform to have compact support, and in turn equality is indeed attained in Hausdorff-Young. On the other hand, using this information, we see that if a commutative hypergroup $K$ with non-trivial centre sits outside $PW_H$, then one can always find a non-trivial function $f$ on $K$ such that $f$ attains equality in $L^p(K)$ for each $p\in [1, 2]$. 
	
	In fact, we deduce that given any commutative hypergroup $K\in PW_H^c$, one can always construct an open subhypergroup $H$ of $K$ such that the family of translates of some fixed non-zero $f \in C_c(K)$ by certain measures on $H$ is very small in $L^2(K)$, and has a finite dimensional linear span. Finally, this provides us with a certain characterization in the class of all commutative hypergroups with non-trivial centre (with respect to a Haar measure). We see that such a hypergroup will not follow the basic uncertainty principle if and only if it admits a non-trivial function $f$ such that $||\widehat{f}||_q=||f||_p$ for some $p\in (1, 2)$, $1/p + 1/q=1$, \textit{i.e,} $1$ is the best constant of Hausdorff-Young inequality for some $p\in (1, 2)$. We can immediately infer that hypergroups that are either compact or discrete or certain joins, will not follow the basic uncertainty principle.

	\section{Preliminary}
	
	
	Here we first list a basic set of notations, followed by brief formal definitions of the basic objects and techniques used throughout the article.
	

	For any locally compact Hausdorff topological space $X$, we denote by $M(X)$ the space of all regular complex Borel measures on $X$, where $M^+(X)$ denote the subset of $M(X)$ consisting of all finite non-negative, regular Borel measures  on $X$. Moreover, $\mathcal{B}(X), C(X), C_c(X)$ and $C_0(X)$ respectively denote the function spaces of all Borel measurable functions, bounded continuous functions, compactly supported continuous functions and continuous functions vanishing at infinity on $X$.  The support of a Borel measure $\mu$ on $X$ is denoted by $supp(\mu)$. If $f\in L^p(X, \omega)$, where $\omega$ is a regular positive Borel measure on $X$, then  $supp(f)$ simply denotes the essential support of $f$.

	We follow convention and denote the unit circle and the closed unit disc in $\mathbb{C}$ by $\mathbb{T}$ and $\mathbb{D}$ respectively.  Recall that the signum function $sgn:\mathbb{C}\setminus \{0\}\ra \mathbb{T}$ is defined as $sgn(z) = z/|z|$. We set $sgn(0)=0$ as per convention. For any subset $A\subseteq X$, the characteristic function associated to $A$ is denoted by $\chi_{_A}$. A function $F$ on $X$ is called a simple function if there exists mutually disjoint Borel sets of finite measure $E_1, E_2, \ldots , E_n \subseteq X$   and scalars $a_i, b_j \in \mathbb{R}$, $1\leq i, j \leq n$ for some $n\in \mathbb{N}$, such that
	$F(x) = \sum_{k=1}^n
	\exp({a_k + i b_k}) \chi_{_{E_k}}(x)$
	for each $x\in X$.
	
	Unless mentioned otherwise, the space $M^+(X)$ is equipped with the \textit{cone topology} \cite{JE}, \textit{i.e,} the weak topology on $M^+(X)$ induced by $ C_c^+(X)\cup \{\chi_{_X}\}$. We denote  the set of all compact subsets of $X$ by $\mathfrak{C}(X)$, and consider the \textit{Michael topology}\cite{MT} on it, which makes it into a locally compact Hausdorff space.  For any element $x\in X$, we denote by $p_x$ the point-mass measure or the Dirac measure at the point $\{x\}$.

	Note that in terms of the definitions and notations, we follow Jewett's notion of hypergroups, which he called `convos' in \cite{JE}. As discussed in the introduction, a hypergroup is a pair $(K,*)$ where $*$ is a convolution on $M(K)$ such that $(M(K),*)$ assumes a certain algebraic structure with identity and involutions. It can be shown that a vast class of double-coset spaces and orbit spaces of locally compact groups, polynomial-spaces in several variables and certain subsets of $\mathbb{R}$ naturally admit a hypergroup structure on their respective measure spaces (see \cite{CB1,CB2,BH,JE, RO} for detailed examples).

	\begin{defn}\textbf{(Hypergroup)} A pair $(K, *)$ is called a (topological) hypergroup if it satisfies the following properties :
		
		\begin{description}
			\item[(A1)] $K$ is a locally compact Hausdorff space and $*$ defines a binary operation on $M(K)$ such that $(M(K), *)$ becomes an associative algebra.
			
			\item[(A2)] The bilinear mapping $* : M(K) \times M(K) \rightarrow M(K)$ is positive continuous.
			
			\item[(A3)] For any $x, y \in K$ the measure $p_x * p_y$ is a probability measure with compact support.
			
			\item[(A4)] The map $(x, y) \mapsto \mbox{ supp}(p_x * p_y)$ from $K\times K$ into $\mathfrak{C}(K)$ is continuous.
			\item[(A5)] There exists an element $e \in K$ such that $p_x * p_e = p_e * p_x = p_x$ for any $x\in K$.
			\item[(A6)] There exists an involution $x\mapsto \widetilde{x}$ on $K$ such that $e \in \mbox{supp} (p_x * p_y)$ if and only if $x=\widetilde{y}$.
		\end{description}
	\end{defn}
	
	The element $e$ in the above definition is called the \textit{identity} of $K$. Both the identity and  involution map are necessarily unique \cite{JE}. Recall that a topological involution $x\mapsto \widetilde{x}$ on $K$ is called an involution for hypergroup if for any $\mu, \nu \in M(K)$ we have that
	$$(\mu * \nu)\widetilde{} = \widetilde{\nu} * \widetilde{\mu} ,$$
	where for any measure $\omega \in M(K)$ we have that $\widetilde{\omega}(B) := \omega(\widetilde{B})$ for any Borel measurable subset $B$ of $K$. For any $f\in \mathcal{B}(K)$, the function $\widetilde{f}$ is defined as $\widetilde{f}(x):=f(\widetilde{x})$ for each $x\in K$. Hence it follows immediately   that $\int_K\widetilde{f} \ d\mu = \int_K f \ d\widetilde{\mu}$ for any $\mu\in M(K)$. For any two subsets $A,B \subset K$ the convolution is defined as the following:
	$$A*B := \bigcup_{x\in A, y\in B}  supp(p_x*p_y) .$$
	Note that unlike locally compact group, the convolution of two connected sets need not be connected \cite{BH, JE}.
	
	A non-empty closed subset $H$ of a hypergroup $K$ is called a \textit{subhypergroup} if $H*H\subseteq H$ and $\widetilde{H}=H$. Note that the Michael topology on $\mathfrak{C}(H)$ is same as the relative topology induced as a subset of $\mathfrak{C}(K)$ \cite{JE}. The \textit{centre} $Z(K)$ of a hypergroup $K$ is defined as the following:
	$$Z(K) := \{x\in K : p_{_x} * p_{_{\widetilde{x}}} = p_{_{\widetilde{x}}} * p_{_x} = p_{_e}\}.$$
	We see immediately that $Z(K)$ is a subhypergroup of $K$, and is   a  commutative locally compact group itself, which is also known as the \textit{maximum subgroup} of $K$. It is easy to see that this definition coincides with the definition of centre for semihypergroups \cite{CB2}, \textit{i.e,} for any hypergroup $K$, we have$$Z(K) = \{x\in K : supp(p_{_x}*p_{_y}), supp(p_{_y}*p_{_x}) \mbox{ are singleton sets for each } y\in K\}.$$
	
	Given any subset $E$ of a hypergroup $K$, the subhypergroup generated by $E$, denoted as $\langle E \rangle$, is the smallest subhypergroup of $K$ that contains $E$. The following proposition can be proved in a straight-forward fashion.

	\begin{prop} \label{subgen}
		Let $K$ be a hypergroup and $E$ be a  neighborhood of $e$ in $K$. Then  $\langle E\rangle$ is essentially the closure of the following set:
		\begin{equation*}
		\underset{\underset{a_i\in E \cup \widetilde{E}, 1\leq i \leq n} {n\in \mathbb{N} }} {\bigcup} \Big{(} \{a_1\}*\{a_2\}*\cdots * \{a_n\} \Big{)} .
		\end{equation*} 
	\end{prop}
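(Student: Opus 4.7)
The plan is to identify $\langle E\rangle$ with $\overline{S}$, where
$$S \;:=\; \bigcup_{n\in \mathbb{N}}\;\bigcup_{a_1,\ldots,a_n \in E\cup \widetilde{E}} \{a_1\}*\cdots *\{a_n\},$$
by establishing both inclusions. The easy direction $\overline{S}\subseteq \langle E\rangle$ is essentially tautological: the subhypergroup $\langle E\rangle$ is closed, contains $E$, and therefore also contains $\widetilde{E}$ (by involution-invariance) together with every finite convolution of elements drawn from $E\cup \widetilde{E}$; thus $S\subseteq \langle E\rangle$, and taking closures preserves the inclusion.

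For the reverse direction, it suffices to verify that $\overline{S}$ is itself a subhypergroup containing $E$, since minimality of $\langle E\rangle$ will then force $\langle E\rangle\subseteq \overline{S}$. Containment of $E$ is immediate from the $n=1$ slice of $S$. Involution invariance follows from the identity $(\{a_1\}*\cdots *\{a_n\})\widetilde{}\,=\{\widetilde{a_n}\}*\cdots *\{\widetilde{a_1}\}$, obtained by iterating $(\mu*\nu)\widetilde{}=\widetilde{\nu}*\widetilde{\mu}$; the right-hand side again lies in $S$ because $E\cup \widetilde{E}$ is closed under the involution, and continuity of $x\mapsto \widetilde{x}$ transfers this invariance to $\overline{S}$. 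The remaining and principal step is to show $\overline{S}*\overline{S}\subseteq \overline{S}$.

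I expect this last step to be the main obstacle, and it is precisely where axiom (A4) becomes essential. The algebraic inclusion $S*S\subseteq S$ is immediate by concatenating words of generators. To push it through the closure, fix $x,y\in \overline{S}$ and choose nets $x_\alpha,y_\beta\in S$ with $x_\alpha\to x$ and $y_\beta\to y$. Axiom (A4) then guarantees that $\mathrm{supp}(p_{x_\alpha}*p_{y_\beta})\to \mathrm{supp}(p_x*p_y)$ in the Michael topology on $\mathfrak{C}(K)$, and since each $\mathrm{supp}(p_{x_\alpha}*p_{y_\beta})=\{x_\alpha\}*\{y_\beta\}$ lies in $S*S\subseteq S$, standard hyperspace considerations give $\mathrm{supp}(p_x*p_y)\subseteq \overline{S}$; the relevant fact is that if a net of compact sets all contained in a subset $T$ converges in the Michael topology to a compact set $L$, then $L\subseteq \overline{T}$, which is a direct check from the definition of Vietoris-type convergence. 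Unwinding the definition $A*B=\bigcup_{x\in A,\,y\in B}\mathrm{supp}(p_x*p_y)$ then upgrades the point-mass statement to $\overline{S}*\overline{S}\subseteq \overline{S}$, completing the verification that $\overline{S}$ is a subhypergroup and hence equal to $\langle E\rangle$.
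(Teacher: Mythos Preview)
Your proof is correct. The paper itself does not supply a proof of this proposition, stating only that it ``can be proved in a straight-forward fashion''; your two-inclusion argument, with axiom (A4) and the Michael topology handling the passage from $S*S\subseteq S$ to $\overline{S}*\overline{S}\subseteq\overline{S}$, is precisely the routine verification the authors leave implicit.
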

	We say that a neighborhood $E$ of $e$ is symmetric if $\widetilde{E}=E$. For any symmetric neighborhood $E$ of $e$, we denote by $M_0(\langle E\rangle)$ the subset of $M(\langle E\rangle)$ consisting of convolutions of finitely many measures in $M(E)$. In particular,
	$$M_0(\langle E\rangle) := \{\mu_1*\mu_2*\cdots *\mu_n : \mu_i\in M(E) \mbox{  \ for  \ } 1\leq i \leq n, \ n\in \mathbb{N}\}.$$
	
	Now for any $f\in \mathcal{B}(K)$ and $x, y \in K$, we denote the left translate of $f$ by $x$ (resp. the right translate of $f$ by $y$) as $L_xf$ (resp. $R_yf$) and define them as  $L_xf(y) = R_yf(x)=f(x*y):= \int_K f \ d(p_{_x}*p_{_y})$.

	A non-zero Borel measure $\lambda$ on $K$ taking values in $[0, \infty]$ is called a left Haar measure if $(p_{_x}*\lambda)$ is well defined and equals $\lambda$ for each $x\in K$. For each $\mu\in M(K)$ and  $f, g \in \mathcal{B}(K)$, the convolutions are defined  as below for each $x\in K$, whenever they exist.
	\begin{eqnarray*}
		(\mu *f)(x) &=& \int_K f(\widetilde{y}*x) \ d\mu(y). \\
		(f *\mu)(x) &=& \int_K f(x*\widetilde{y}) \ d\mu(y).\\
		(f*g) (x) &=& \int_K f(x*y) g(\widetilde{y}) \ d\lambda(y),
	\end{eqnarray*}
	where $\lambda$ is a fixed left Haar measure on $K$. For each $p\in [1, \infty]$, we denote $L^p(K, \lambda)$ simply as $L^p(K)$. Finally, recall \cite{BH, JE} that the \textit{left-regular representation} of a hypergroup $K$ is the faithful non-degenerate norm-decreasing $*$-homomorphism $T:M(K) \ra B(L^2(K))$ given by $T(\mu)(f) = T_{\mu}(f) := \mu*f$ 
	for each $\mu\in M(K), f\in L^2(K)$. For each $x\in K$, we abuse notation and write $T_{p_{_x}}(f)$ simply as $T_xf$ or $T_x(f)$. Note that for any $x, y\in K, f\in L^2(K)$, we have that $T_x(f)(y) = (p_x*f)(y) = f(\widetilde{x}*y) = L_{\widetilde{x}}f(y)$. 
	Since $T$ is a homomorphism, for each $x, y\in K$ and $f\in L^2(K)$ we also have the following:
	$$T_{(p_x*p_y)} (f) = \int_K T_z(f) \ d(p_{_x}*p_{_y})(z).$$
	
	Next, we recall some definitions and facts from the study of commutative hypergroups and their duals \cite{BH,JE}. A hypergroup $K$ is called \textit{commutative} if for each $x, y \in K$ we have that $(p_x*p_y) = (p_y*p_x)$.

	\begin{defn}
		A bounded measurable function $\gamma: K\rightarrow \mathbb{C}$ is called a \textit{character} on $K$ if $\gamma \nequiv 0$  on $K$ and the following   are satisfied  for each $x, y \in K$.
		\begin{enumerate}
			\item $\gamma(x*y)=\gamma(x)\gamma(y)$.
			\item $\gamma(\widetilde{x}) = \overline{\gamma(x)}$.
		\end{enumerate}
	\end{defn}
	It follows immediately that $||\gamma||_\infty = \gamma(e)=1$ for any continuous character $\gamma$ on $K$. Note that\cite{BH, JE} unlike locally compact abelian groups, $|\gamma(x)|$ need not lie in $\mathbb{T}$ where  $\gamma$ is any non-trivial character  on $K$ and $x\in K\setminus \{e\}$. In fact, one can easily show the following equivalence \cite{BH}:
	$$Z(K) = \{x\in K : |\gamma(x)|=1 \mbox{ for each continuous character } \gamma \mbox{ on } K \}.$$
	
	For a commutative hypergroup $K$, the dual $\widehat{K}$ of $K$ is defined \cite{JE} as the set of all continuous characters on $K$. Recall\cite{BH,JE} that $\widehat{K}$ is given the topology of uniform convergence on compact subsets of $K$, which coincides with the Gelfand topology on the structure space of the commutative Banach $*$-algebra $L^1(K)$. Note that although $\widehat{K}$ is a non-empty locally compact Hausdorff space, unlike locally compact abelian groups, it need not be a hypergroup \cite{BH, JE}.
	
	For any subset $L$ of $K$, the \textit{annihilator} $N(L)$ of $L$  is defined as $N(L):= \{\gamma \in \widehat{K}: \gamma(x)=1 \mbox{ \ for all \ } x \in L\}$. We know that $N(L)$ will   be closed in $\widehat{K}$ for any subset $L\subseteq K$. Moreover, if $L$ is a compact subhypergroup of $K$, then $N(L)$ is open in $\widehat{K}$ \cite{BH}.
	
	Recall \cite{BH} that every commutative hypergroup admits a Haar measure. Let $K$ be a  commutative hypergroup and $\lambda$ be a Haar measure on it. Then the \textit{Fourier transform} on $K$ is a map  $f\mapsto \widehat{f}:L^1(K) \ra C_0(\hk)$  given by
	$$\widehat{f}(\gamma):= \int_K f(x)\overline{\gamma(x)} \ d\lambda(x), $$
	for each $\gamma \in \widehat{K}$. Similarly, for any $\mu \in M(K)$ the \textit{Fourier-Stieltjes transform} on $K$ is a map $\mu\mapsto \widehat{\mu}:M(K) \ra C(\hk)$  given by $\widehat{\mu}(\gamma):= \int_K \overline{\gamma(x)} \ d\mu(x) $ for each $\gamma \in \widehat{K}$. 
	
	Consider the set $\mathcal{S}= \{\gamma \in \hk : |\widehat{\mu}(\gamma)| \leq ||T_{\mu}|| \mbox{ for each } \mu \in M(K) \}$. By the Levitan-Plancherel Theorem for hypergroups \cite{BH,JE} we know that there exists a unique non-negative regular Borel measure $\pi$ on $\hk$ such that for any $f\in L^1(K)\cap L^2(K)$, we have
	$$\int_K |f|^2 \ d\lambda = \int_{\hk} |\widehat{f}|^2 \ d\pi.$$
	In fact, we have that $supp(\pi)=\mathcal{S}$. We call $\pi$ the \textit{Plancherel measure} on $\hk$. Note that unlike locally compact groups, we do not necessarily have that $\mathcal{S}=\hk$. In fact, we have that $\mathcal{S}=\hk$ if   the dual $\hk$ is again a hypergroup, although the converse is not true \cite[Example 9.1C]{JE}.
	
	Throughout this article, whenever we write $\hk$, we consider the measure space $(\hk, \pi)$. Hence for any $p\in [1, \infty]$, whenever we write $L^p(\hk)$, we mean the space $L^p(\mathcal{S}) = L^p(\hk, \pi)$. Finally, recall \cite{JE} that the \textit{inverse Fourier transform} on a commutative hypergroup $K$ is the map $f\mapsto \widecheck{f}: L^1(\hk) \ra C_0(K)$ defined as
	$$\widecheck{f}(x) := \int_{\hk} f(\gamma) \gamma(x) \ d\pi(\gamma).$$

	We conclude this section with a brief account on hypergroup joins\cite{BH, JE}, which provides us with an abundance of useful examples of hypergroups. Given a compact hypergroup $(H, *_H)$ with normalized left Haar measure $\lambda_H$ and a discrete hypergroup $(J, *_J)$ with left Haar measure $\lambda_J$ such that $H\cap J=\{e\}$, where $e$ is the identity of both $H$ and $J$, the hypergroup join $K:=(H\wedge J, *)$ is the space $(H\cup J, *)$, where the convolution is defined as the following:
	\begin{align*}
	p_{_x} * p_{_y} \  = \ p_{_y} * p_{_x} &:= p_{_y}, \mbox{  if  }  x\in H, y \in J\setminus \{e\};\\
	p_{_x} * p_{_y} &:= p_{_x} *_{_H} p_{_y},  \mbox{ if  }  x, y \in H;\\
	p_{_x} * p_{_y} &:= p_{_x} *_{_J} p_{_y},  \mbox{  if  }  x, y \in J, x\neq \widetilde{y};\\
	p_{_x} * p_{_y} &:= c_e \lambda_{_H} + \sum_{_{z\in J\setminus \{e\}}} c_z p_{_z},  \mbox{   if   }  x, y \in J\setminus \{e\}, x= \widetilde{y}, 
	\end{align*}
	where   we have $p_{_x} *_{_J} p_{_{\widetilde{x}}} = \sum_{_{z\in J}} c_z p_{_z}$. The underlying space  $(H\cup J)$ is equipped with the unique (locally compact) topology for which both $H$ and $J$ are closed subspaces of $(H\cup J)$. Observe that $H$ is a compact subhypergroup of $K$. Moreover, the left Haar measure $\lambda$ of $K$ is given by $\lambda= \lambda_{_H} + \chi_{_{J\setminus \{e\}}} \lambda_{_J}$ (see \cite[1.5.13]{BH} for further details).
	
	
	
	\section{Equality in Hausdorff-Young Inequality}
	
	Let $K$ be a commutative hypergroup, $\lambda $ be a Haar measure on $K$ and $\pi$ be the Plancherel measure on $K$ associated with $\lambda$. 
	For any $p\in [1,2]$, we denote by $p'$ the conjugate of $p$, \textit{i.e,} $p'\in \mathbb{R}$ such that $\frac{1}{p} + \frac{1}{p'}=1$ if $1<p\leq 2$ and $p'=\infty$ if $p=1$.

	
	It was shown in \cite{SD} that one can   extend the domain of the Fourier transform to $L^p(K)$ for $1\leq p\leq 2$, as well. In fact, using the density of $C_c(K)$ it can be shown   that for $1\leq p\leq 2$, the Hausdorff-Young transformation $f\mapsto \widehat{f}: L^p(K)\ra L^{p'}(\hk)$ is a well-defined linear mapping such that the Hausdorff-Young inequality holds true \cite[Proposition 2.1]{SD}:
	$$||\widehat{f}||_{p'} \leq ||f||_p.$$
	For $1\leq p\leq 2$ we say that a function $f \in L^p(K)$  \textit{attains equality in Hausdorff-Young}  if we have that $||\widehat{f}||_{p'} = ||f||_p$.
	
	The domain of the inverse Hausdorff-Young transformation $f\mapsto \widecheck{f}$ can also be extended in a similar fashion \cite[Proposition 2.2]{SD} and for $1\leq p \leq 2$, we have that the inverse Hausdorff-Young map $f\mapsto \widecheck{f}: L^p(\hk)\ra L^{p'}(K)$ is a well-defined linear mapping such that the Hausdorff-Young inequality $||\widecheck{f}||_{p'} \leq ||f||_p$ holds true.
	We say that a function $f \in L^p(\hk)$  \textit{attains equality in Hausdorff-Young} on $\hk$  if we have that $||\widecheck{f}||_{p'} = ||f||_p$. The following properties regarding the Hausdorff-Young and inverse Hausdorff-Young maps are proved in \cite{BH,  SD, JE}. 
	
	\begin{rem}\label{remhy}
		Let $K$ be any commutative hypergroup. Then the following assertions hold true for $1\leq p \leq 2$.
		\begin{enumerate}
			\item For $f\in L^1(K)$ such that $\widehat{f}\in L^1(\hk)$,   we have that $\big{(}\widehat{f} \  {\widecheck{\big{)}}} = f$, $\lambda$-almost everywhere on $K$.
			\item For any $f\in L^p(K)$, $\phi \in L^p(\hk)$, we have that
			$ (\widehat{f}\phi)^{\widecheck{}} = f * \widecheck{\phi}$.
			\item For any $f\in L^1(K)$, $\phi \in L^p(\hk)$, we have that
			$ (\widehat{f}\phi)^{\widecheck{}} = f * \widecheck{\phi}$, $\lambda$-almost everywhere on $K$.
		\end{enumerate}
	\end{rem}

	Now recall from the introduction that we say that a hypergroup $K$ abides by the basic uncertainty principle if it is included in the following set:
	\begin{eqnarray*}
		PW_H &:=&  \{ K : K \mbox{ is a commutative hypergroup, and there does not exist }\\
		& & \mbox{any non-trivial function } f \mbox{ on } K \mbox{ such that both } f \mbox{ and } \widehat{f} \mbox{ have }\\
		& & \mbox{ compact support} \}.
	\end{eqnarray*}
	In the first main result that follows, we show that for a commutative hypergroup $K$, if the constant $1$ in Hausdorff-Young inequality is not sharp for some $p\in (1,2)$, then $K\notin PW_H$. We know that the standard proof of Hausdorff-Young inequality is an application of the Riesz-Thorin complex interpolation theorem. Hence in order to study equality in Hausdorff-Young, one requires some careful observations in the techniques used in the proof of Riesz-Thorin's theorem, and in turn investigate the role played by the equality conditions for H\"{o}lder's inequality in the technique.
	\begin{thm} \label{main2}
		Let $1<p<2$. If  $f\in L^p(K)$ attains equality in Hausdorff-Young, then both $f$ and $\widehat{f}$ are compactly supported in $K$ and $\hk$ respectively,  \textit{i.e,} $K\notin PW_H$. Moreover, the supports of $f$ and $\widehat{f}$ are open in $K$ and $\hk$ respectively, as well.
	\end{thm}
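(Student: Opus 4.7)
The plan is to follow the Riesz--Thorin interpolation proof of the Hausdorff--Young inequality carefully and to read off the rigidity that equality throughout that proof forces on $f$ and $\hf$, adapting the classical analysis of Hewitt--Hirschman \cite{HH} to the hypergroup setting. The new feature here is that a character $\gamma\in\hk$ need not satisfy $|\gamma(x)|=1$ for $x$ outside the centre $Z(K)$, so this constraint will reappear as a genuine restriction rather than being automatic as in the group case.

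After normalizing $\|f\|_p=1=\|\hf\|_{p'}$, I would use $L^p$--$L^{p'}$ duality on $(\hk,\pi)$ to fix a maximiser $g\in L^p(\hk)$ with $\|g\|_p=1$ and $\int_{\hk}\hf\cdot g\,d\pi=1$; one may take $g=|\hf|^{p'-1}\overline{\mathrm{sgn}(\hf)}$. With $s:=2-2/p\in(0,1)$ and $1/p(z):=(1-z)+z/2$, set
\[ F_z := |f|^{p/p(z)}\,\mathrm{sgn}(f), \qquad G_z := |g|^{p/p(z)}\,\mathrm{sgn}(g), \]
so that $F_s=f$, $G_s=g$, and $\|F_{iy}\|_1=\|F_{1+iy}\|_2=\|G_{iy}\|_1=\|G_{1+iy}\|_2=1$ for every $y\in\mathbb{R}$. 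Then $\Phi(z):=\int_{\hk}\widehat{F_z}\,G_z\,d\pi$ is continuous and bounded on the closed strip $\{0\le\mathrm{Re}\,z\le1\}$ and holomorphic on its interior. On $\mathrm{Re}\,z=0$, the $L^1\!\to\!L^\infty$ bound and H\"older give $|\Phi(iy)|\le\|\widehat{F_{iy}}\|_\infty\|G_{iy}\|_1\le1$; on $\mathrm{Re}\,z=1$, Plancherel and Cauchy--Schwarz give $|\Phi(1+iy)|\le1$. Since $\Phi(s)=1$, the maximum-modulus principle on the strip forces $\Phi\equiv1$.

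The core of the proof is then to chase these inequalities back to equalities on both boundary lines. On $\mathrm{Re}\,z=0$, the chain
\[ \Bigl|\int_K F_{iy}(x)\overline{\gamma(x)}\,d\lambda(x)\Bigr| \le \int_K|f(x)|^p|\gamma(x)|\,d\lambda(x) \le \int_K|f|^p\,d\lambda = 1 \]
must be tight for $\pi$-a.e.\ $\gamma\in\mathrm{supp}(g)$, forcing both $|\gamma(x)|=1$ for $\lambda$-a.e.\ $x\in\mathrm{supp}(f)$ and a constant argument for $F_{iy}(x)\overline{\gamma(x)}=|f(x)|^p\,\mathrm{sgn}(f(x))\,|f(x)|^{-iyp/2}\,\overline{\gamma(x)}$ on $\mathrm{supp}(f)$. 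A Fubini argument extracts a single $\gamma$ together with a positive-measure set of $y$'s for which the constant-argument condition holds simultaneously; writing out this condition for two distinct $y_1\ne y_2$ in that set and subtracting shows that $\log|f(x)|$ is constant on $\mathrm{supp}(f)$, so $|f|$ is constant there. A symmetric Cauchy--Schwarz analysis on $\mathrm{Re}\,z=1$, which makes $|\widehat{F_{1+iy}}|$ proportional to $|g|^{p/2}$, combined with the now-explicit form of $F_{1+iy}$ and the maximiser identity $|g|=|\hf|^{p'-1}$, likewise forces $|\hf|$ to be constant on $\mathrm{supp}(\hf)$.

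Writing $|f|\equiv c$ on $\mathrm{supp}(f)$ and $|\hf|\equiv c'$ on $\mathrm{supp}(\hf)$, the normalizations yield $\lambda(\mathrm{supp}(f))=c^{-p}<\infty$ and $\pi(\mathrm{supp}(\hf))={c'}^{-p'}<\infty$, so $f\in L^1(K)$ and $\hf\in L^1(\hk)$. Remark \ref{remhy}(1) then gives $f=\widecheck{\hf}\in C_0(K)$ and $\hf\in C_0(\hk)$. Since $|f|$ is continuous and takes only the values in $\{0,c\}$, the set $\mathrm{supp}(f)=\{x:|f(x)|\ge c/2\}$ is simultaneously closed, open (as the complement of the zero set of a continuous function), and compact (by the vanishing-at-infinity property of $C_0(K)$); the parallel argument on $\hk$ handles $\mathrm{supp}(\hf)$, which in particular gives $K\notin PW_H$. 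The principal technical obstacle lies in the boundary equality chase above: the a.e.-in-$\gamma$-and-in-$y$ information must be consolidated into a pointwise statement on $\mathrm{supp}(f)$, and one may additionally need a preliminary truncation $\varepsilon\le|f|\le M$ to ensure that the analytic family $F_z$ is controlled enough on the strip for three-lines to apply cleanly.
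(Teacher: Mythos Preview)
Your proposal is correct and follows the same Hewitt--Hirschman three-lines strategy as the paper, but the route to compactness is organized differently. The paper introduces the dual extremizer $f^*=|\hf|^{p'-2}\hf$ (your $\overline{g}$), proves via a duality argument (Lemma~\ref{fstarmax}) that $f^*$ also attains equality, and then reads compactness off the $z=1$ boundary directly: since $S_1f^*\in L^1(\hk)$, one has $(S_1f^*)^{\widecheck{}}\in C_0(K)$, and the equality in H\"older forces $\mathrm{supp}(f)\subseteq D_1:=\{|(S_1f^*)^{\widecheck{}}\,|=1\}$, which is compact as a top level set of a $C_0$ function. The paper only shows $|f|$ constant on its support afterwards, in Corollary~\ref{cor1}. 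You instead extract ``$|f|$ constant'' first, from the $\mathrm{Re}\,z=0$ line by varying $y$, and then pass through finite-measure support $\Rightarrow f\in L^1\Rightarrow f\in C_0$ to reach compactness and openness. Both orderings are valid; the paper's path avoids the Fubini consolidation you flag as the main technical obstacle, while yours essentially proves Theorem~\ref{main2} and Corollary~\ref{cor1} in one sweep. The technical caveat you note about controlling the analytic family is handled in the paper exactly as you anticipate, by approximating $f$ and $f^*$ with simple functions (Lemma~\ref{lementire} and the sequences $\{F_n\},\{H_n\}$).
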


	Before we proceed to the proof of the theorem, we need to prove a couple of lemmas and introduce a few objects associated to them. First, adhering to the notions used in the Riesz-Thorin complex interpolation theorem, given any locally compact Hausdorff space $X$ with a positive Borel measure $\mu$ and a function $f\in L^p(X, \mu)$, we define the `\textit{complexification of} $f$' for any fixed $z\in \mathbb{C}$ as the function $S_zf: X \ra \mathbb{C}$ given by:
	$$S_zf(x):= |f(x)|^{p(z+1)/2} \ sgn(f(x)),$$
	for each $x\in X$.
	
	\begin{rem} \label{remtz}
		Define a function $r:\mathbb{C}\setminus \{z\in \mathbb{C}: Re(z)= -1\} \ra \mathbb{R}$ as $r(z):= \frac{2}{Re(z)+1}$. Then for any $f\in L^p(X,\mu)$, $z\in \mathbb{C}$ such that $Re(z) \neq -1$, we have:
		\begin{eqnarray*}
			||S_zf||_{r(z)}^{r(z)} &=& \int_X |S_zf(x)|^{r(z)} \ d\mu(x)\\
			&=& \int_X \Big{|}\ |f(x)|^{\frac{p(Re(z) +1)}{2} + i  p \frac{Im(z)}{2}}\Big{|}^{r(z)} \ d\mu(x)\\
			&=& \int_X |f(x)|^{\frac{p(Re(z) +1)}{2} \ r(z)} \ d\mu(x)\\
			&=& \int_X |f|^p \ d\mu \ = \ ||f||_p^p.
		\end{eqnarray*}
		Hence $S_zf \in L^{r(z)}(X, \mu)$. Furthermore, we have   $|S_{\bar{z}}f|=|S_z\bar{f}|= |f|^{\frac{p(Re(z)+1)}{2}}$.
	\end{rem}
	
	The next lemma is a very useful observation regarding the complexification of simple functions on $K$ and their Fourier transforms. The proof works similarly to the case for abelian locally compact groups\cite{HH}. We include the details of the proof  here for convenience and completeness.
	
	\begin{lem} \label{lementire}
		Let $F$ and $H$ be two simple functions on $K$ and $\hk$ respectively. Fix any $p\in (1, 2)$, and define a complex function $\Omega:\mathbb{C}\ra \mathbb{C}$ as the following:
		$$\Omega(z):= \int_{\hk} {(S_zF)}^{\widehat{}} (\gamma) S_z\widebar{H}(\gamma) \ d\pi(\gamma).$$
		Then $\Omega$ is an entire function.
	\end{lem}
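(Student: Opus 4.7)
My plan is to compute $\Omega(z)$ in closed form by directly expanding the two simple functions involved. Write $F = \sum_{k=1}^n e^{a_k+ib_k}\chi_{E_k}$ on $K$, with the $E_k$ mutually disjoint and $\lambda(E_k)<\infty$, and similarly $H = \sum_{j=1}^m e^{c_j+id_j}\chi_{D_j}$ on $\hk$, with disjoint $D_j$ satisfying $\pi(D_j)<\infty$. On $E_k$ one has $|F|=e^{a_k}$ and $sgn(F)=e^{ib_k}$, while on $D_j$ one has $|\widebar{H}|=e^{c_j}$ and $sgn(\widebar{H})=e^{-id_j}$. Interpreting $\alpha^w := e^{w\log\alpha}$ for real $\alpha>0$, the definition of the complexification immediately yields
\[
S_zF \;=\; \sum_{k=1}^n e^{ib_k}\exp\!\left(\tfrac{a_k p(z+1)}{2}\right)\chi_{E_k}, \qquad S_z\widebar{H} \;=\; \sum_{j=1}^m e^{-id_j}\exp\!\left(\tfrac{c_j p(z+1)}{2}\right)\chi_{D_j}.
\]

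Since each $\chi_{E_k}\in L^1(K)$, linearity of the Fourier transform on $L^1(K)$ gives
\[
\widehat{S_zF} \;=\; \sum_{k=1}^n e^{ib_k}\exp\!\left(\tfrac{a_k p(z+1)}{2}\right)\widehat{\chi_{E_k}}
\]
as a finite sum of bounded continuous functions on $\hk$. Substituting into the integral defining $\Omega$ and pulling the finite double sum outside of the integral then yields
\[
\Omega(z) \;=\; \sum_{k=1}^n \sum_{j=1}^m C_{k,j}\,\exp\!\left(\tfrac{(a_k+c_j)p(z+1)}{2}\right),
\]
where $C_{k,j} := e^{i(b_k-d_j)}\int_{\hk}\widehat{\chi_{E_k}}(\gamma)\,\chi_{D_j}(\gamma)\,d\pi(\gamma)$. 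Each $C_{k,j}$ is a finite complex constant independent of $z$, because $\widehat{\chi_{E_k}}\in C_0(\hk)$ is bounded and $\chi_{D_j}$ is $\pi$-integrable (as $\pi(D_j)<\infty$). Thus $\Omega$ is a finite linear combination of entire exponential functions of $z$, and is therefore itself entire.

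The argument encounters no serious obstacle; its content is essentially the observation that complexification of a simple function replaces finitely many scalar coefficients by entire exponentials in $z$, after which linearity of the Fourier transform and of the integral reduce $\Omega$ to a finite sum of such exponentials. The only bookkeeping worth noting is the justification for interpreting $|F|^{p(z+1)/2}$ (and the analogue for $\widebar{H}$) via $e^{w\log\alpha}$ on each $E_k$ (resp.\ $D_j$), which is immediate since $|F|$ and $|\widebar{H}|$ are positive constants on those sets; holomorphy in $z$ of the resulting exponentials is then automatic.
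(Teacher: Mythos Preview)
Your proposal is correct and follows essentially the same route as the paper: both expand $F$ and $H$ as simple functions, compute $S_zF$ and $S_z\widebar{H}$ explicitly (obtaining exponentials in $z$ as coefficients), and reduce $\Omega(z)$ to a finite linear combination of entire exponentials with constant coefficients given by integrals of $\widehat{\chi_{E_k}}$ against $\chi_{D_j}$. The only cosmetic difference is that the paper unpacks $\widehat{\chi_{E_k}}$ further to write the coefficients as double integrals $\int_{D_j}\int_{E_k}\overline{\gamma(x)}\,d\lambda(x)\,d\pi(\gamma)$ and bounds them via $|\gamma(x)|\leq 1$, whereas you invoke $\widehat{\chi_{E_k}}\in C_0(\hk)$ directly; both justifications are equivalent.
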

	
	\begin{proof}
		
		Let $F = \sum_{k=1}^N \exp({a_k + i b_k}) \chi_{_{I_k}}$ where $I_1, I_2, \ldots I_N \subseteq K$ are mutually disjoint sets with finite measures  and scalars $a_i, b_j \in \mathbb{R}$, $1\leq i, j\leq N$ for some $N\in \mathbb{N}$. Similarly, let $H = \sum_{k=1}^M \exp({c_k + i d_k}) \chi_{_{J_k}}$, where  $J_1, J_2, \ldots J_M \subseteq \widehat K $ are mutually disjoint sets with finite measure and scalars $c_i, d_j \in \mathbb{R}$, $1\leq i, j\leq M$ for some $M\in \mathbb{N}$. Thus for each $z\in \mathbb{C}$ we have that
		$$S_zF(x) \ = \ |F(x)|^{\frac{p(z+1)}{2}} \ sgn(F(x)) = \sum_{n=1}^N
		\exp \Big{(} \frac{pa_n(z+1)}{2} \Big{)} \exp(ib_n) \chi_{_{I_n}}(x),$$
		for each $x\in K$. Similarly for each $z\in \mathbb{C}$, on $\hk$ we have that
		$$S_z\bar{H}(\gamma) \ = \ |H(\gamma)|^{\frac{p(z+1)}{2}} \ sgn(\bar{H}(\gamma)) =  \sum_{m=1}^M
		\exp \Big{(} \frac{pc_m(z+1)}{2} \Big{)} \exp(-id_m) \chi_{_{J_m}}(\gamma),$$
		for each $\gamma \in \hk$. Hence for each $z\in \mathbb{C}$, one can define $\Omega$ in the following manner:
		\begin{eqnarray*}
			\Omega (z) &=&  \int_{\hk} (S_zF)^{\widehat{}} (\gamma) S_z\bar{H}(\gamma) \ d\pi(\gamma)\\
			&=& \sum_{m=1}^M \int_{J_m} (S_zF)^{\widehat{}}(\gamma) \exp \Big{(} \frac{pc_m(z+1)}{2} \Big{)} \exp(-id_m) \ d\pi(\gamma)\\
			&=& \sum_{m=1}^M  \exp \Big{(} \frac{pc_m(z+1)}{2} - id_m \Big{)} \int_{J_m}\int_K S_zF(x) \overline{\gamma(x)} \ d\lambda(x) \ d\pi(\gamma)\\
			&=& \sum_{m=1}^M  \exp \Big{(} \frac{pc_m(z+1)}{2} - id_m \Big{)}\int_{J_m}\sum_{n=1}^N\int_{I_n}\exp\Big{(}\frac{pa_n(z+1)}{2}\Big{)}\\
			& & \exp(ib_n)\overline{\gamma(x)} \ d\lambda(x) \ d\pi(\gamma)\\
			&=& \sum_{m=1}^M \sum_{n=1}^N  \exp \Big{(}\frac{pc_m(z+1)}{2} + \frac{pa_n(z+1)}{2} + ib_n - id_m\Big{)}\\
			& & \int_{J_m}\int_{I_n} \overline{\gamma(x)} \ d\lambda(x) \ d\pi(\gamma).\\
		\end{eqnarray*}
		\noindent	But, each $I_n$ and $J_m$ is of finite measure and hence
		\begin{eqnarray*}
			\Big{|} \int_{J_m}\int_{I_n} \overline{\gamma(x)} \ d\lambda(x) \ d\pi(\gamma) \Big{|} &\leq &  \int_{J_m}\int_{I_n} |\overline{\gamma(x)}| \ d\lambda(x) \ d\pi(\gamma)\\
			& \leq & \int_{J_m}\int_{I_n} \ d\lambda(x) \ d\pi(\gamma) \ = \ \lambda(I_n)\pi(J_m) \ < \ \infty,
		\end{eqnarray*}
		as $||\gamma||_\infty = 1$ for each $\gamma \in \hk$. Hence it follows immediately that $\Omega$ is entire.
	\end{proof}

	As emphasized while introducing the statement of Theorem \ref{main2} above, the  equality-conditions for the following slightly generalized version of H\"{o}lder's inequality \cite{HH} play a pivotal role in the proof of the theorem. 
	
	\begin{lem} \label{heq}
		Let $1\leq p < \infty$, $f\in L^p(X)$, $g\in L^{p'}(X)$ where $(X, \mu)$ is any measure space where $\mu$ is a non-negative measure. Then equality is attained in a generalized H\"{o}lder's inequality, \textit{i.e,} we have: $$\int_X fg \ d\mu = ||f||_p||g||_{p'} \neq 0 $$ if and only if the following assertions are satisfied:
		\begin{enumerate}
			\item If $p>1$, then $\mu$-almost everywhere on $X$ we have: $$g(x) = ||g||_{p'} ||f||_p^{1-p} |f(x)|^{p-1} sgn (\overline{f(x)}).$$
			\item If $p=1$, then  $\mu$-almost everywhere on $\{x\in X: f(x)\neq 0\}$ we have: $$g(x) = ||g||_\infty \ sgn (\overline{f(x)}).$$
		\end{enumerate}
	\end{lem}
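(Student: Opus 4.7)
The plan is to prove both directions via the standard argument built around the inequality chain
$$\Big{|} \int_X fg \ d\mu \Big{|} \leq \int_X |fg| \ d\mu \leq ||f||_p \ ||g||_{p'},$$
and to track when each step becomes an equality. Under the hypothesis $\int_X fg \ d\mu = ||f||_p \ ||g||_{p'}$, the left-most expression equals the right-most one and is a positive real number, so both intermediate inequalities in the chain must saturate.

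For the forward direction with $p>1$, I would first normalize by setting $\tilde{f} := f/||f||_p$ and $\tilde{g}:= g/||g||_{p'}$, so that $||\tilde{f}||_p = ||\tilde{g}||_{p'}=1$. The pointwise Young inequality $ab \leq a^p/p + b^{p'}/p'$ applied to $a = |\tilde f(x)|$ and $b = |\tilde g(x)|$, followed by integration, yields $\int_X |\tilde f \tilde g| \ d\mu \leq 1$, with equality iff $|\tilde f(x)|^p = |\tilde g(x)|^{p'}$ holds $\mu$-almost everywhere. Undoing the normalization and using $p/p' = p-1$, this gives $|g(x)| = ||g||_{p'} \ ||f||_p^{1-p} \ |f(x)|^{p-1}$ $\mu$-a.e.; in particular $g=0$ wherever $f=0$. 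Next, saturation of the first inequality in the chain forces $fg(x)$ to be a non-negative real number $\mu$-a.e. on its support, which (using $sgn(z) = z/|z|$ for $z\neq 0$) translates to $sgn(g(x)) = sgn(\overline{f(x)})$ on $\{f\neq 0\}$. Combining the modulus and sign information reproduces exactly the stated formula, with the $sgn(\overline{f})$ factor automatically enforcing $g=0$ on $\{f=0\}$ via the convention $sgn(0)=0$.

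For $p=1$ the argument simplifies: the second inequality in the chain becomes $\int_X |fg| \ d\mu \leq ||g||_\infty \int_X |f| \ d\mu$, and equality here forces $|g(x)| = ||g||_\infty$ $\mu$-a.e.\ on $\{f\neq 0\}$, while the phase condition $sgn(g) = sgn(\overline{f})$ on the same set is extracted exactly as above.

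Both converses are direct substitutions. For $p>1$, using $f(x)\,sgn(\overline{f(x)}) = |f(x)|$ on $\{f\neq 0\}$, one computes
$$\int_X fg \ d\mu \ = \ ||g||_{p'} \ ||f||_p^{1-p} \int_X |f|^p \ d\mu \ = \ ||g||_{p'} \ ||f||_p,$$
and the $p=1$ case is analogous. I expect no substantive obstacle; the only mildly delicate point is bookkeeping on the null set $\{f=0\}$, which is handled cleanly by the $sgn(0)=0$ convention fixed in the preliminaries.
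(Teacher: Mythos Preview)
Your proposal is correct and follows essentially the same argument as the paper: both exploit the chain $\int_X fg\,d\mu = |\int_X fg\,d\mu| \leq \int_X |fg|\,d\mu \leq \|f\|_p\|g\|_{p'}$, extract the phase relation $sgn(g)=sgn(\overline{f})$ from saturation of the first inequality, and invoke the equality condition in Young's inequality for products to obtain the modulus relation. Your version is slightly more explicit (you spell out the normalization and the Young-inequality step, and you include the converse), whereas the paper sketches only the forward direction and defers the modulus part to the standard equality criterion without detail.
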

	\begin{proof}
		Note that the given equality implies the following assertions:
		\begin{eqnarray*}
			0 \ \neq \ ||f||_p||g||_{p'} &=& \int_X fg \ d\mu\\
			&=& \Big{|}\int_X fg \ d\mu \Big{|}\\
			&\leq & \int_X |fg| \ d\mu \ \leq \ ||f||_p||g||_{p'}.
		\end{eqnarray*}
		Hence each inequality above must be an equality. In particular, we have that 
		$$\int_X |fg| \ d\mu = \int_X fg \ d\mu = \int_X |fg|sgn(f)sgn(g) \ d\mu.$$
		Hence we must have that $sgn(g(x)) = sgn(\overline{f(x)})$ $\mu$-almost everywhere on $X$. Furthermore, since equality is attained in the classical H\"{o}lder's inequality $\int_X |fg| \ d\mu \leq ||f||_p||g||_{p'}$, we have the required assertion following the equality-conditions in Young's inequality for products.
	\end{proof}

	We know that for any commutative hypergroup $K$, the Fourier transform $\widehat{f}$ lies in $L^{p'}(\hk)$ for any function $f\in L^p(K)$. Now using $\widehat{f}$, we construct a function $f^*$ in $L^p(\hk)$ as the following:
	$$f^*(\gamma ) := |\widehat{f}(\gamma )|^{p'-2} \: \widehat{f}(\gamma ),$$
	for each $\gamma \in \hk$.
	
	\begin{rem} \label{remfstar}
		Note that with this definition, $f^* \in L^p(\hk)$ and $||f^*||_p^p = ||\widehat{f}||_{p'}^{p'}$ since we have:
		\begin{eqnarray*}
			||f^*||_p^p \ = \ \int_{\hk} |f^*(\gamma)|^p \ d\pi(\gamma) &=& \int_{\hk} |\widehat{f}(\gamma )|^{(p'-1)p} \ d\pi(\gamma)\\
			&=& \int_{\hk} |\widehat{f}(\gamma )|^{p'} \ d\pi(\gamma) \ = \ ||\widehat{f}||_{p'}^{p'}.
		\end{eqnarray*}
	\end{rem}
	Finally, the next lemma confirms that for  $1<p<2$ if a function $f\in L^p(K)$ attains equality in Hausdorff-Young on $K$, then the corresponding function $f^* \in L^p(\hk)$ will also attain equality in Hausdorff-Young on $\hk$. Again, the proof of the lemma follows closely the ideas used for a similar result in  locally compact abelian groups \cite{HH}. We include a brief sketch of proof here for the sake of completeness, and highlight the areas where results specific to hypergroups need to be imposed.

	\begin{lem} \label{fstarmax}
		Let $1<p<2$. If $f\in L^p(K)$ attains equality in Hausdorff-Young, then $f^*\in L^{p}(\widehat{K})$ also attains equality in Hausdorff-Young. Moreover,  for each $x\in K$ we have that $$( \widecheck{f^*})(x) = ||f||_p^{p'-p} |f(x)|^{p-2}{f(x)}$$
		for each $x\in K$, $\lambda$-almost everywhere.
	\end{lem}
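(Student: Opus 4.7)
The plan is to bypass the Riesz--Thorin machinery entirely and argue via a Parseval-type duality combined with H\"older's inequality and Lemma \ref{heq}. I would first establish the identity
\[
\int_K f(x)\,\overline{\widecheck{f^*}(x)}\, d\lambda(x) \;=\; \int_{\hk} \widehat{f}(\gamma)\,\overline{f^*(\gamma)}\, d\pi(\gamma),
\]
which is the natural extension of the $L^2$-Plancherel relation to the pair $(f, f^*) \in L^p(K) \times L^p(\hk)$. Granting this, the right-hand side reduces immediately to $\int_{\hk} |\widehat{f}|^{p'}\, d\pi = \|\widehat{f}\|_{p'}^{p'}$ by the very definition of $f^*$, and by hypothesis this equals $\|f\|_p^{p'}$.

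Next I would chain H\"older's inequality with the inverse Hausdorff--Young inequality to obtain
\[
\|f\|_p^{p'} \;=\; \int_K f\,\overline{\widecheck{f^*}}\, d\lambda \;\leq\; \|f\|_p\,\|\widecheck{f^*}\|_{p'} \;\leq\; \|f\|_p\,\|f^*\|_p.
\]
Since $\|f^*\|_p = \|\widehat{f}\|_{p'}^{p'/p} = \|f\|_p^{p'-1}$ (using Remark \ref{remfstar} together with $p'/p = p'-1$), the rightmost expression collapses to $\|f\|_p^{p'}$. Both intermediate inequalities must therefore be equalities: saturation of the inverse Hausdorff--Young inequality yields $\|\widecheck{f^*}\|_{p'} = \|f^*\|_p$, which is the first conclusion; saturation of H\"older's inequality for the pair $\bigl(f,\overline{\widecheck{f^*}}\bigr) \in L^p(K)\times L^{p'}(K)$, via Lemma \ref{heq} followed by complex conjugation, gives
\[
\widecheck{f^*}(x) \;=\; \|\widecheck{f^*}\|_{p'}\,\|f\|_p^{1-p}\,|f(x)|^{p-2}\,f(x) \qquad \lambda\text{-a.e. on } K.
\]
Substituting $\|\widecheck{f^*}\|_{p'} = \|f\|_p^{p'-1}$ then recovers the announced formula.

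The main obstacle is the justification of the Parseval identity at the $L^p$-level, because for generic $f \in L^p(K)$ the Fourier integral defining $\widehat{f}$ is not absolutely convergent, and the same holds for $\widecheck{f^*}$. I would handle this by verifying the identity first on the dense subspace $L^1(K) \cap L^2(K)$, where the classical Plancherel theorem from \cite{BH,JE} applies and Fubini is routine, and then extending by continuity of the $L^p$-Fourier and inverse Fourier maps established in \cite{SD}. The limiting step is unproblematic because the right-hand integrand $\widehat{f}\,\overline{f^*} = |\widehat{f}|^{p'}$ is automatically in $L^1(\hk)$, so dominated convergence disposes of any convergence issue that might otherwise arise from the individual factors.
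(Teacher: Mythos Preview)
Your proposal is correct and follows essentially the same route as the paper, with only cosmetic differences. The paper's proof also hinges on the Parseval-type identity $\int_K \phi\,\overline{\widecheck{f^*}}\,d\lambda = \int_{\hk}\widehat{\phi}\,\overline{f^*}\,d\pi$, which it invokes directly as \cite[Proposition 4.1]{SD} rather than re-deriving by density; so your ``main obstacle'' is already handled in the literature and no approximation argument is required. The paper then wraps the chain of inequalities inside a linear functional $Q(\phi):=\int_{\hk}\widehat{\phi}\,\overline{f^*}\,d\pi$ on $L^p(K)$, computes $\|Q\|=\|f\|_p^{p'/p}$ via H\"older and Hausdorff--Young, and identifies the Riesz representative of $Q$ with $\widecheck{f^*}$ --- this is equivalent to, but slightly less direct than, your bare inequality chain $\|f\|_p^{p'}\le \|f\|_p\|\widecheck{f^*}\|_{p'}\le \|f\|_p\|f^*\|_p=\|f\|_p^{p'}$. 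One small remark: your opening line about ``bypassing the Riesz--Thorin machinery'' is misplaced, since the paper's proof of this particular lemma makes no use of Riesz--Thorin either; that machinery enters only later in the proof of Theorem~\ref{main2}.
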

	
	\begin{proof}
		Recall from Remark \ref{remfstar} that $f^*, \bar{f^*} \in L^p(\hk)$. Since $f\in L^p(K)$ attains equality in Hausdorff-Young, it follows immediately that $||f^*||_p = ||\widehat{f}||_{p'}^{p'/p} = ||f||_p^{p'/p}$. Now we define a linear functional $Q: L^p(K) \ra \mathbb{C}$ as
		$$ Q(\phi) := \int_{\widehat{K}} \widehat{\phi}(\gamma) \overline{ f^*(\gamma)} \ d\pi(\gamma) \ \ \mbox{for each } \phi \in L^p(K).$$
		Note that $Q$ is a bounded linear functional on $L^p(K)$ with $||Q|| = ||f||_p^{p'/p}$ since
		$$Q(f/||f||_p) = \int_{\widehat{K}} ||f||_p^{-1} \widehat{f} |\widehat{f}|^{p'-2} \overline{\widehat{f}} \ d\pi = ||f||_p^{-1} \int_{\widehat{K}} |\widehat{f}|^{p'} \ d\pi = ||f||_p^{p'/p},$$
		and H\"{o}lder's inequality and the Hausdorff-Young inequality for hypergroups \cite{SD} on $L^p(K)$  implies that:
		\begin{eqnarray*}
			||Q|| &=& \sup_{||\phi||_p\leq 1} |	Q(\phi)|\\
			&\leq & \sup_{||\phi||_p\leq 1} \Big{(} \int_{\widehat{K}} |\widehat{\phi}| |f^*| \ d\pi \Big{)}\\
			&\leq & ||f^*||_p \sup_{||\phi||_p\leq 1} \big{(} ||\phi||_p\big{)} = ||f^*||_p = ||f||_p^{p'/p}.
		\end{eqnarray*}
		Now it follows from the duality of $L^p$-spaces that there exists a unique 	$g\in L^{p'}(K)$ such that $||g||_{p'} = ||Q||= ||f||_p^{p'/p}$ and 
		$$ Q(\phi) = \int_K \phi(x)\overline{g(x)} \ d\lambda(x) \ \ \mbox{ for each } \phi \in L^p(K).$$
		On the other hand, since $f^*\in L^p(\hk)$ and $\phi \in L^p(K)$, the generalized Parseval's identity for hypergroups \cite[Proposition 4.1]{SD} implies that:
		$$Q(\phi)= \int_{\widehat{K}} \widehat{\phi}(\gamma) \overline{ f^*(\gamma)} \ d\pi(\gamma) = \int_K \phi(x) \overline{\widecheck{f^*}(x)} \ d\lambda(x).$$
		Thus for each $\phi \in L^p(K)$ we have that
		$$ \int_K \phi(x) \overline{\widecheck{f^*}(x)} \ d\lambda(x) = \int_K \phi(x)\overline{g(x)} \ d\lambda(x) .$$
		Hence $\widecheck{f^*}= g$ $\lambda$-almost everywhere on $K$. Moreover, we have that $f^*$ attains equality in Hausdorff-Young in $L^p(\widehat{K})$ since 
		$$||\widecheck{f^*}||_{p'} = ||g||_{p'} = ||Q|| = ||f||_p^{p'/p}=||f^*||_p.$$
		Next in order to gain an explicit expression for $\widecheck{f^*}$, first we set $\phi=f$ in particular. This implies that:	
		\begin{eqnarray*}
			\int_K f(x) \overline{g(x)} \ d\lambda(x)   =  \ Q(f) &=& \int_{\hk} \widehat{f} (\gamma) \overline{f^*(\gamma)} \ d\pi(\gamma)\\
			&=&\int_{\hk} |\widehat{f}|^{p'} \ d\pi = ||\widehat{f}||_{p'}^{p'} = ||f||_p ||f||_p^{p'-1} = ||f||_p ||\bar{g}||_{p'}.	
		\end{eqnarray*}		
		Thus equality holds in H\"{o}lder's inequality on $K$. Since $p>1$, using Lemma \ref{heq}, for almost all $x\in K$ we have that:
		\begin{eqnarray*}
			\overline{g(x)} &=& ||\bar{g}||_{p'} ||f||_p^{1-p} |f(x)|^{p-1} \ sgn(\bar{f}(x))\\
			&=& ||f||_p^{p'-p} |f(x)|^{p-2} \ \bar{f}(x).
		\end{eqnarray*}		
		Hence  for each $x\in K$, $\lambda$-a.e we have the following structure of $\widecheck{f^*}$:
		$$\widecheck{f^*} (x) = g(x) = ||f||_p^{p'-p} |f(x)|^{p-2} f(x).$$
	\end{proof}
	Now we return to the proof of the first main theorem  (Theorem \ref{main2}) of this article, as discussed before.

	\noi
	{\bf Proof of   \Cref{main2}:}
	
	First, we assume without any loss of generality that $||f||_p =1$.
	
	Let $\mathbb{S}$ denote the closed strip $\{z\in \mathbb{C} : 0\leq Re(z) \leq 1\}$, $\mathbb{S}_0$ denote the interior $\{z\in \mathbb{C} : 0 < Re(z) < 1\}$ of the strip and $\mathbb{S}'$ denote the half-open strip $\{z\in \mathbb{C} : 0 \leq Re(z) < 1\}$. Set $\mathbb{L}:= \mathbb{S}\setminus \mathbb{S}' = \{z\in \mathbb{C} : Re(z)=1\}$.
	
	Recall from Remark \ref{remtz} that we defined $r(z)= \frac{2}{Re(z)+1}$ for each $z\in \mathbb{C}$, $Re(z)\neq -1$. Hence  we have that $1\leq r(z) \leq 2$ for any $z\in \mathbb{S}$. Moreover, we know from the same remark that $S_zf\in L^{r(z)}(K)$ for each $z\in \mathbb{S}$ and $||S_zf||_{r(z)} = ||f||_p^{p/r(z)} = 1$ for each $z\in \mathbb{S}$. Thus the family $\{S_zf\}$ is uniformly bounded on $\mathbb{S}$. Hence using the density properties of simple functions  in  $L^{r(z)}(K)$, we can find\cite{HH} a sequence of simple functions $\{F_n\}$ on $K$ such that the following conditions are satisfied :
	\begin{enumerate}
		\item $||S_zF_n||_{r(z)} \leq ||S_zf||_{r(z)}$ for each $n\in \mathbb{N}$, $z\in \mathbb{S}$.
		\item $\lim_{n\ra \infty} ||S_zF_n - S_zf||_{r(z)} = 0 $ uniformly on compact subsets of $\mathbb{S}$.
	\end{enumerate}
	Similarly, since we know from Remark \ref{remfstar} that $f^*\in L^p(\hk)$, we can find a sequence $\{H_n\}$ of simple functions on $\hk$ such that the following assertions hold true:
	\begin{enumerate}
		\item $||S_zH_n||_{r(z)} \leq ||S_zf^*||_{r(z)}$ for each $n\in \mathbb{N}$, $z\in \mathbb{S}$.
		\item $\lim_{n\ra \infty} ||S_zH_n - S_zf^*||_{r(z)} = 0 $ uniformly on compact subsets of $\mathbb{S}$.
	\end{enumerate}
	Now for each $n\in \mathbb{N}$, we define $\phi_n:\mathbb{C}\ra \mathbb{C}$ as
	$$\phi_n(z) := \int_{\hk} (S_zF_n)^{\widehat{}}(\gamma) S_z\bar{H}_n(\gamma) \ d\pi(\gamma).$$
	It follows immediately from Lemma \ref{lementire} that each $\phi_n$ is an entire function. Now define $\phi: \mathbb{C}\ra \mathbb{C}$  as
	
	$$\phi(z) := \int_{\hk} (S_zf)^{\widehat{}}(\gamma) S_z\bar{f^*}(\gamma) d\pi(\gamma).$$
	Then for any $z\in \mathbb{S}$ and $n\in \mathbb{N}$ we have the following observation:
	\begin{eqnarray*}
		|\phi_n(z) - \phi(z)| & \leq & \int_{\hk} \Big{|} (S_zF_n)^{\widehat{}}(\gamma) S_z\bar{H}_n(\gamma) - (S_zf)^{\widehat{}}(\gamma) S_z\bar{f^*}(\gamma) \Big{|} \ d\pi(\gamma)\\
		& \leq & \int_{\hk} {|} (S_zF_n)^{\widehat{}} - (S_zf)^{\widehat{}}{|}(\gamma) \  {|} S_z\bar{H}_n{|}(\gamma) \ d\pi(\gamma) \\
		& & + \int_{\hk} {|} (S_z\bar{H}_n) - (S_z\bar{f^*}){|}(\gamma)\  {|} (S_zf)^{\widehat{}}{|}(\gamma) \ d\pi(\gamma)\\
		& \leq & ||((S_zF_n - S_zf)^{\widehat{}})||_{r(z)'} \: ||S_z\bar{H}_n||_{r(z)} \\
		& &  + ||(S_z\bar{H}_n) - (S_z\bar{f^*})||_{r(z)} \: ||(S_zf)^{\widehat{}}||_{r(z)'}\\
		& \leq & ||S_zF_n - S_zf||_{r(z)} \: ||S_{\bar{z}}f^*||_{r(\bar{z})}\\
		& &  + ||S_{\bar{z}}H_n-S_{\bar{z}}f^*||_{r(\bar{z})} \: ||S_zf||_{r(z)},
	\end{eqnarray*}
	where the third inequality follows from H\"older's inequality and the fourth inequality follows from the Hausdorff-Young inequality\cite{SD} on $L^p(K)$ for $1\leq p\leq 2$ and the construction of the sequences $\{H_n\}$ and $\{F_n\}$.

	Hence it follows immediately from the choice of the respective sequences $\{H_n\}$ and $\{F_n\}$ on $\hk$ and $K$ that the sequence $\phi_n$ converges to $\phi$ uniformly on compact sets of $\mathbb{S}$. Hence $\phi$ is analytic on
	$\mathbb{S}_0$ and continuous on $\mathbb{S}$.
	
	Using H\"older's inequality for $\phi_n(z)$ for each $n\in \mathbb{N}$, $z\in \mathbb{S}$ we have that
	\begin{eqnarray*}
		|\phi_n(z)| & \leq & ||(S_zF_n)^{\widehat{}}||_{r(z)'} \ ||S_{\bar{z}}H_n||_{r(z)}\\
		&\leq & ||S_zF_n||_{r(z)} \ ||S_{\bar{z}}H_n||_{r(z)}\\
		&\leq & ||S_z f||_{r(z)} \ ||S_{\bar{z}}f^*||_{r(\bar{z})}\\
		&=& ||f||_p^{p/r(z)} \ ||f^*||_p^{p/r(z)}\\
		&=& ||f||_p^{p/r(z)} \ ||\widehat{f}||_{p'}^{p'/r(z)}\\
		&\leq & ||f||_p^{p/r(z)} \ ||{f}||_{p}^{p'/r(z)} \ = \ ||f||_p^{\frac{p+p'}{r(z)}} \ = \ 1 ,
	\end{eqnarray*}
	where the second and fourth inequality follows from the Hausdorff-Young inequality on $L^{r(z)}(K)$ and $L^p(K)$ respectively, since $1\leq r(z) \leq 2$, the third inequality follows from the choice of the sequences $\{F_n\}$ and $\{H_n\}$ , the first equality follows from Remark \ref{remtz} since $r(\bar{z})=r(z)$ and finally, the second equality follows from Remark \ref{remfstar}. Hence we have that $|\phi(z)| \leq 1$ for each $z\in \mathbb{S}$.
	
	In particular, note that $(-1+ 2/p) \in \mathbb{S}_0$ since $1<p<2$. Also, $S_{(-1+2/p)}f = |f| \ sgn(f) =f$ and $S_{(-1+2/p)}\bar{f^*} = |\bar{f^*}| \ sgn(\bar{f^*}) = \bar{f^*}$. Hence at $z = (-1+ 2/p) \in \mathbb{S}_0$, we have that:
	
	\begin{eqnarray*}
		\phi(z) & = & \int_{\hk} \widehat{f} (\gamma) \ \bar{f^*} (\gamma) \ d\pi(\gamma)\\
		& = & \int_{\hk} \widehat{f} (\gamma) \: \big{|}{\widehat{f}(\gamma)}\big{|}^{p'-2}   \ \overline{\widehat{f}(\gamma)} \ d\pi(\gamma)\\
		& = & \int_{\hk} |\widehat{f}(\gamma)|^{p'} \ d\pi(\gamma)\\
		& = & ||\widehat{f}||_{p'}^{p'} \ = \ ||f||_p^{p'} \ = \ 1,
	\end{eqnarray*}
	where the second-last equality is attained since $f$ attains equality in Hausdorff-Young. 
	
	Hence we must have that $\phi \equiv 1$ on $\mathbb{S}$. Proceeding in the same manner as $\phi_n(z)$, for each $z\in \mathbb{S}$ we have the following:
	\begin{eqnarray*}
		1 \ = \ \phi(z) \ = \ \int_{\hk} (S_zf)^{\widehat{}}(\gamma) S_z\bar{f^*}(\gamma) d\pi(\gamma) &\leq & ||S_{\bar{z}}f^*||_{r(\bar{z})} \ ||(S_zf)^{\widehat{}}||_{r(z)'}\\
		&\leq & ||S_{\bar{z}}f^*||_{r(\bar{z})} \ ||(S_zf)||_{r(z)}\\
		& = & ||f^*||_p^{p/r(\bar{z})} \ ||f||_p^{p/r(z)}\\
		& = & ||f||_p^{p'/r(z)} \ ||f||_p^{p/r(z)} \ = \ 1,
	\end{eqnarray*}
	where the second-last equality  follows from Remark \ref{remfstar} since $f$ attains equality in Hausdorff-Young in $L^p(K)$. Hence for each $z\in \mathbb{S}$ we must have that
	$$ \ \int_{\hk} S_z\bar{f^*}(\gamma) (S_zf)^{\widehat{}}(\gamma)  \ d\pi(\gamma) = ||S_{{z}}\bar{f^*}||_{r({z})} \ ||(S_zf)^{\widehat{}}||_{r(z)'} = 1.$$
	Note that $r(z) >1$ for each $z\in \mathbb{S}'$ and $r(z)=1$ for $z\in \mathbb{L}$. Hence it follows from Lemma \ref{heq} that for each $z\in \mathbb{S}'$, $(S_zf)^{\widehat{}}$ must have the following structure:
	$$(S_zf)^{\widehat{}}(\gamma) = ||(S_zf)^{\widehat{}}||_{r(z)'} \ ||S_z\bar{f^*}||_{r(z)}^{1-r(z)} |S_z\bar{f^*}(\gamma)|^{r(z)-1} \ sgn\Big{(}\overline{S_z\bar{f^*}(\gamma)}\Big{)},$$
	for all $\gamma \in \hk$ $\pi$-almost everywhere. But it follows from Remarks \ref{remtz} and \ref{remfstar} that for each $z\in \mathbb{S}$ we have 
	$$||S_z\bar{f^*}||_{r(z)} = ||\bar{f^*}||_p^{p/r(z)} = ||f||_p^{p'/r(z)} = 1.$$
	Hence $||(S_zf)^{\widehat{}}||_{r(z)'} = 1$ as well. Also recall that it follows immediately from the construction of $S_zg$ for each $g\in L^s(K)$, $1\leq s \leq 2$, that 
	$$|S_zg| = |S_z\bar{g}|= |S_{\bar{z}}g| = |g|^{\frac{s(Re(z)+1)}{2}},$$
	and similarly, $sgn(S_{\bar{z}}g) = |g|^{-i  \frac{s  Im(z)}{2}} \ sgn(g) = \big{(}sgn(S_z\bar{g})\big{)}^{-1} = sgn\big{(}\overline{S_z\bar{g}}\big{)}$. Thus for any $z\in \mathbb{S}'$,  we have that
	\begin{equation} \label{E1}
	(S_zf)^{\widehat{}}(\gamma) = |S_{\bar{z}}{f^*}(\gamma)|^{r(z)-1} \ sgn\big{(}{S_{\bar{z}}{f^*}(\gamma)}\big{)}
	\end{equation}
	for all $\gamma \in \hk$, $\pi$-almost everywhere. Similarly, since $r(z)=1, r(z)' = \infty$ for any $z\in \mathbb{L}$, we have that $||(S_zf)^{\widehat{}}||_{\infty} = 1$. Hence using Lemma \ref{heq}, for any $z\in \mathbb{L}$, we have that
	\begin{equation} \label{E2}
	(S_zf)^{\widehat{}}(\gamma) = sgn \Big{(}{S_{\bar{z}}{f^*}(\gamma)}\Big{)}
	\end{equation}
	for all $\gamma\in \{\theta \in \hk : S_z\bar{f^*}(\theta)\neq 0\}$ , $\pi$-almost everywhere.
	
	Now for each $z\in \mathbb{L}$, consider the set $E_z:= \{\gamma \in \mathcal{S} : |(S_zf)^{\widehat{}}(\gamma)|=1\} \subset \hk $. It follows from Equation \ref{E2} that $S_{\bar{z}}{f^*}(\gamma) = 0$ for every $\gamma\in E_z^c$, $\pi$-almost everywhere.
	
	On the other hand, we know from Lemma \ref{fstarmax} that $f^*\in L^p(\hk)$ attains equality in Hausdorff-Young, \textit{i.e,} $||f^*||_p = ||\widecheck{f^*}||_{p'}$. Moreover, we have that $||f^*||_p=1$ since $||f||_p=1$ and for each $x\in K$ we have:
	\begin{eqnarray*}
		{|} \widecheck{f}^* (x){|}^{p'-1} \ sgn(\widecheck{f}^*(x)) &=& |f(x)|^{(p-1)(p'-1)} \ sgn(f(x))\\
		&=& |f(x)| \ sgn(f(x)) \ = \ f(x).
	\end{eqnarray*}
	Hence we can proceed in similar fashion, interchanging $f$ and $f^*$, and thereby  defining $\phi(z) = \int_K (S_zf^*)^{\widecheck{}} (x) S_z\bar{f}(x) \ d\lambda(x)$, to eventually use Lemma \ref{heq} in order to conclude that for each $z\in \mathbb{S}'$ we have that:
	\begin{equation} \label{E1'}
	(S_zf^*)^{\widecheck{}}(x) = |S_{\bar{z}}{f}(x)|^{r(z)-1} \ sgn\Big{(}{S_{\bar{z}}{f}(x)}\Big{)}
	\end{equation}
	for all $x\in K$, $\lambda$-almost everywhere. Similarly, for any $z\in \mathbb{L}$, we have that:
	\begin{equation} \label{E2'}
	(S_zf^*)^{\widecheck{}}(x) = sgn \Big{(}{S_{\bar{z}}{f}(x)}\Big{)}
	\end{equation}
	for all $x\in \{y\in K : S_z\bar{f}(y)\neq 0\}$ , $\lambda$-almost everywhere.
	
	Now for each $z\in \mathbb{L}$, consider the set $D_z:= \{x\in K : |(S_zf^*)^{\widecheck{}}(x)|=1\} \subset K $. It follows from Equation \ref{E2'} that $S_{\bar{z}}{f}(x) = S_zf(x) = 0$ for every $x\in D_z^c$, $\lambda$-almost everywhere.
	
	In particular, consider $z=1 \in \mathbb{L}$. Note that $S_1f = |f|^p \ sgn(f)$ and $S_1f^* = |f^*|^p \ sgn(f^*)$. Hence for $\lambda$-almost everywhere on $ K$, we have that if $x\notin D_1$, then $|f(x)|^p  = |S_1f(x)| =0$, and hence $f(x)=0$. Similarly, for $\pi$-almost everywhere on $\hk$, we have that if $\gamma\notin E_1$, then 
	$$ |\widehat{f}(\gamma)|^{p'} = |\widehat{f}(\gamma)|^{(p'-1)p} = |f^*(\gamma)|^p  = |S_1f^*(\gamma)| =0,$$
	and hence $\widehat{f}(\gamma)=0$. Thus we see that the essential supports of $f$ and $\widehat{f}$ are included in $D_1$ and $E_1$ respectively.
	
	Note that since $S_1f^*\in L^1(\hk)$ by Remarks \ref{remtz} and \ref{remfstar}, we have that $(S_1f^*)^{\widecheck{}} \in C_0(K)$ and therefore Hausdorff-Young inequality on $L^1(\hk)$  implies that
	$$|(S_1f^*)^{\widecheck{}}(x)| \leq ||(S_1f^*)^{\widecheck{}}||_\infty \leq ||S_1f^*||_1 = ||f^*||_p^p=1,$$
	for each $x\in K$. Hence we can rewrite $D_1$ as $D_1 = \{x\in K : |(S_1f^*)^{\widecheck{}}(x)|\geq 1\}$. Since $(S_1f^*)^{\widecheck{}} \in C_0(K)$, we have that $D_1$ is compact, \textit{i.e,} $f$ is compactly supported.
	
	In a similar manner, since $S_1f \in L^1(K)$, we have that $|(S_1f)^{\widehat{}}(\gamma)| \leq ||S_1f||_1 = 1$ for each $\gamma\in \hk$. Hence we can rewrite $E_1$ as $ E_1= \{\gamma \in \mathcal{S} : |(S_1f)^{\widehat{}}(\gamma)|\geq 1\}\subseteq \{\gamma \in \hk : |(S_1f)^{\widehat{}}(\gamma)|\geq 1\}$. Since $(S_1f)^{\widehat{}} \in C_0(\hk)$, we conclude that  $E_1$ is compact as well, \textit{i.e,} $\widehat{f}$ is compactly supported in $\hk$.
	
	Now to prove the second part of the theorem, first note that since both $f$ and $\widehat{f}$ are compactly supported, without loss of generality we may assume that both $f$ and $\widehat{f}$ are continuous functions, and therefore so are $f^*, S_zf, S_zf^*$ for any $z\in \mathbb{S}$. 
	
	Moreover, we now have that the equalities shown in Equation \ref{E1} and Equation \ref{E1'} hold for every $z\in \mathbb{S}'$, and for each $\gamma \in \mathcal{S}$ and $x\in K$ respectively.
	
	Next, we fix some $\gamma \in \mathcal{S}$ and $v\in \mathbb{R}$. Let $\{z_n\}$ be a sequence in $\mathbb{C}$ such that $Re(z_n) < 1$ for every $n$ and $z_n \ra (1+iv)$ as $n\ra \infty$. Hence it follows from Equation \ref{E1} that for each $\gamma \in \mathcal{S}$ we have that
	\begin{eqnarray*}
		(S_{(1+iv)}f)^{\widehat{}} (\gamma) &=& \lim_{n\ra\infty} (S_{z_n}f)^{\widehat{}} (\gamma) \\
		&=& \lim_{n\ra\infty} |S_{\bar{z_n}}f^*(\gamma)|^{r(z_n)-1} \ sgn \Big{(} S_{\bar{z_n}}f^*(\gamma)\Big{)}\\
		&=& |S_{(1-iv)}f^*(\gamma)|^{r(1+iv)-1} \ sgn \Big{(} S_{(1-iv)}f^*(\gamma)\Big{)}\\
		&=& sgn \Big{(} S_{(1-iv)}f^*(\gamma)\Big{)}.
	\end{eqnarray*} 
	Similarly, for each $x\in K$ using Equation \ref{E1'} we have that
	$$ (S_{(1+iv)}f^*)^{\widecheck{}} (x) =   sgn \Big{(} S_{(1-iv)}f(x)\Big{)}.$$
	
	Now setting $v=0$, we see that if $\gamma \in E_1$, then $$ sgn(\widehat{f}(\gamma)) = sgn(f^*(\gamma)) = sgn(S_1f^*(\gamma)) = (S_1f)^{\widehat{}}(\gamma)\neq 0,$$ and hence $\widehat{f}\neq 0$. Similarly, if $x \in D_1$, then we have that
	$$ sgn(f(x)) = sgn(S_1f(x)) = (S_1f^*)^{\widecheck{}}(x) \neq 0 ,$$
	and hence $f(x)\neq 0$. 
	
	Thus we have that $supp(f) = D_1$ and $supp(\widehat{f})=E_1$. In particular, since $D_1^c = f^{-1}\{0\}$ is closed, we have that $D_1$  is open as well. Similarly, we have that $E_1$ is open in $\hk$. 
	
	Hence the supports $D_1$ and $E_1$ of $f$ and $\widehat{f}$  are compact open subsets of $K$ and $\widehat{K}$ respectively. \qed
	
	\begin{rem}
		In view of the above result, we see that the constant $1$ can never be the best constant in the Hausdorff-Young inequality for $L^p(K)$, $1<p<2$, for a non-compact connected commutative hypergroup $K$ (in particular, for one dimensional hypergroups $(\mathbb{R}^+, *)$ and  other connected non-compact double-coset spaces $G//H$ where $H$ is a compact subgroup of a locally compact group $G$).
	\end{rem}
	
	Next, in order to acquire the other direction of our desired characterization, given any $K\notin PW_H $, we try to find some non-trivial function $f\in L^p(K)$ which will attain equality in Hausdorff-Young. For this reason, it would be useful to gain some insight into the structure and construction of such non-trivial functions. 
	
	As corollaries to the above theorem, roughly speaking, we see that such a function looks ``very similar" to a characteristic function supported on a compact open subhypergroup, or a translate of such a subhypergroup. The proof of the results are inspired by the related results in the category of locally compact abelian groups \cite{HH, HR2}. However, note that unlike the classical group-case, all the characters of a hypergroup are not unitary in nature, \textit{i.e, } for each $\gamma \in \hk$, $x\in K\setminus\{e\}$, the value $\gamma(x)$ need not necessarily sit in the unit circle $\mathbb{T}$, and in fact, may equal $0$. Moreover, we know that the left-regular representations do not provide isometries in the case of hypergroups. Hence adjustments need to be made in order to include the functions $f\in C_c(X)$ for which the identity $e$ of $K$ is not included in the support of $f$. 
	
	We first recall an elementary result in measure theory that we will use frequently in the proof of the immediate corollary. The result follows immediately from the fact that each point on the unit circle is extremal in nature. 
	
	\begin{lem}\label{lemm}
		Let $X$ be a locally compact Hausdorff space and $\mu$ be a regular Borel probability measure on $X$. For any continuous function $\phi:X\ra \mathbb{D}$, if there exists some $\alpha\in \mathbb{T}$ such that $\int_X \phi \ d\mu = \alpha$, then we must have that $\phi(x) = \alpha$ for each $x\in supp(\mu)$.
	\end{lem}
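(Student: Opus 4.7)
The plan is to reduce to the case $\alpha = 1$ and then exploit the fact that $1$ is extremal in $\mathbb{D}$. Specifically, I would replace $\phi$ by $\bar\alpha \phi$, which still takes values in $\mathbb{D}$ (since $|\bar\alpha|=1$), still continuous, and now satisfies $\int_X \bar\alpha \phi \, d\mu = \bar\alpha \alpha = 1$. Proving the conclusion for the rescaled function and $\alpha = 1$ immediately gives the original claim after multiplying back by $\alpha$.

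Next, I would write $\bar\alpha \phi = u + iv$ with $u,v : X \to \mathbb{R}$ continuous, and note that $u^2 + v^2 = |\bar\alpha\phi|^2 \le 1$ pointwise, so in particular $u \le 1$ and $|v| \le 1$ everywhere on $X$. Taking real and imaginary parts of the equation $\int_X (u + iv) \, d\mu = 1$ yields
\[
\int_X u \, d\mu = 1, \qquad \int_X v \, d\mu = 0.
\]
Since $\mu$ is a probability measure and $1 - u \ge 0$ everywhere, the first identity rewrites as $\int_X (1-u) \, d\mu = 0$, forcing $u = 1$ $\mu$-a.e. Then the pointwise bound $v^2 \le 1 - u^2$ forces $v = 0$ $\mu$-a.e. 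Combining, $\bar\alpha \phi = 1$ $\mu$-almost everywhere on $X$, i.e., $\phi = \alpha$ $\mu$-almost everywhere.

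Finally, I would upgrade ``$\mu$-almost everywhere'' to ``everywhere on $\mathrm{supp}(\mu)$'' by using continuity: if some $x_0 \in \mathrm{supp}(\mu)$ had $\phi(x_0) \ne \alpha$, continuity of $\phi$ would give an open neighborhood $U$ of $x_0$ on which $\phi \ne \alpha$, but $\mu(U) > 0$ by the definition of the support, contradicting the $\mu$-a.e.\ equality just established. There is no real obstacle here; the only subtle point is remembering to combine the extremality argument (which only yields a $\mu$-a.e.\ statement) with the hypothesis that $\phi$ is continuous, which is exactly what transfers the conclusion to the topological support.
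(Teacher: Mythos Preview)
Your proof is correct and is precisely an explicit unfolding of what the paper only asserts: the paper does not supply a proof of this lemma, merely remarking beforehand that ``the result follows immediately from the fact that each point on the unit circle is extremal in nature.'' Your reduction to $\alpha=1$, the real-part argument forcing $u=1$ $\mu$-a.e., and the continuity upgrade to $\mathrm{supp}(\mu)$ together constitute exactly the standard extremality argument the paper has in mind.
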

	
	\begin{cor}\label{cor0}
		Let $1<p<2$. If $f\in L^p(K)$ attains equality in Hausdorff-Young, then the following statements hold true.
		\begin{enumerate}
			\item If $x\in supp(f)$, $\gamma\in supp(\widehat{f})$, then $\gamma(x)\in \mathbb{T}$.
			\item There exists a compact open subhypergroup $H$ in $K$ and some $x_0\in K$ such that $supp(f) = \{x_0\}*H$.
			\item  If $e\in supp(f)$, then  $supp(f)$ is a compact open subhypergroup of $K$.
		\end{enumerate}
	\end{cor}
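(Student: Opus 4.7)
The approach is to extract from the proof of Theorem \ref{main2} a pointwise identity governing characters on $D_1 := supp(f)$, and use it to deduce all three claims. Specifically, recall from that proof that at $z=1\in\mathbb{L}$ we have $|(S_1f)^{\widehat{}}(\gamma)|=1$ for each $\gamma\in E_1:=supp(\hf)$. Assuming $||f||_p=1$ without loss, this rewrites as
\[
\int_K sgn(f(x))\,\overline{\gamma(x)}\,d\mu(x)=\alpha(\gamma)\in\mathbb{T},
\]
where $d\mu:=|f|^p\,d\lambda$ is a probability measure with $supp(\mu)=D_1$. Since the integrand takes values in $\mathbb{D}$, Lemma \ref{lemm} yields the key identity
\[
\gamma(x)=\overline{\alpha(\gamma)}\,sgn(f(x))\in\mathbb{T}\quad\mbox{for every } x\in D_1,\ \gamma\in E_1,
\]
which immediately proves (1).

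For (3), assume $e\in D_1$. Setting $x=e$ in the identity forces $\alpha(\gamma)=\overline{sgn(f(e))}$, independent of $\gamma$; hence every $\gamma\in E_1$ restricts to the same function $\phi(x):=sgn(f(e))\,sgn(f(x))$ on $D_1$, with $\phi(e)=1$. Consequently $\hf$ is a nonzero constant $c$ on $E_1$, and Fourier inversion (Remark \ref{remhy}, applicable since $\hf$ is compactly supported and hence in $L^1(\hk)$) yields $f(y)=c\int_{E_1}\gamma(y)\,d\pi(\gamma)$ pointwise, invoking the continuity reduction at the end of Theorem \ref{main2}. For $x,y\in D_1$, $\gamma(x*y)=\phi(x)\phi(y)\in\mathbb{T}$; a second application of Lemma \ref{lemm} to the probability measure $p_x*p_y$ forces $\gamma(z)=\phi(x)\phi(y)$ for every $z\in supp(p_x*p_y)$ and every $\gamma\in E_1$. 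Substituting into the inversion formula gives $f(z)=c\,\phi(x)\phi(y)\,\pi(E_1)\neq 0$, so $supp(p_x*p_y)\subseteq D_1$. The involution closure follows identically from $f(\widetilde{x})=c\,\overline{\phi(x)}\,\pi(E_1)\neq 0$, completing (3).

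For (2), fix $x_0\in D_1$ and translate: set $g:=p_{\widetilde{x_0}}*f$. Part (1) gives $|\gamma(x_0)|=1$ on $E_1$, so $|\widehat{g}|=|\hf|$ and $||\widehat{g}||_{p'}=||\hf||_{p'}=||f||_p$. Jensen's inequality together with left-invariance of $\lambda$ yields $||g||_p\leq||f||_p$, and sandwiching by Hausdorff-Young forces $g$ to attain equality with $||g||_p=||f||_p$. Since $g(e)=f(x_0)\neq 0$, part (3) applied to $g$ produces a compact open subhypergroup $H:=supp(g)$. A short Fourier inversion computation shows $f=p_{x_0}*g$. The forward inclusion $D_1\subseteq\{x_0\}*H$ follows from the hypergroup reciprocity $h\in supp(p_{\widetilde{x_0}}*p_y)\Leftrightarrow y\in supp(p_{x_0}*p_h)$. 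For the reverse, Lemma \ref{lemm} applied to $\gamma(\widetilde{x_0}*x_0)=|\gamma(x_0)|^2=1$ places $\{\widetilde{x_0}\}*\{x_0\}$ inside $H$, so for $y\in\{x_0\}*H$ we get $supp(p_{\widetilde{x_0}}*p_y)\subseteq H$; a final application of Lemma \ref{lemm} to $\gamma(x_0*h)$ on this support shows that $\int g\,d(p_{\widetilde{x_0}}*p_y)=(p_{x_0}*g)(y)=f(y)$ is nonzero.

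The main obstacle is the reverse inclusion $\{x_0\}*H\subseteq D_1$ in part (2). In the locally compact abelian group setting the identity $f=p_{x_0}*g$ trivially yields the support equality via invertibility of the point mass $p_{x_0}$, but for hypergroups $p_{x_0}*p_{\widetilde{x_0}}\neq p_e$, so support transfer is not automatic; one must iterate Lemma \ref{lemm}, exploiting unimodularity of characters in $E_1$ on both $D_1$ and $H$ to rule out cancellation in the defining integrals.
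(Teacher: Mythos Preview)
Your argument is correct and takes a genuinely different route from the paper.  Both you and the paper extract the identity $\gamma(x)=\overline{\alpha(\gamma)}\,sgn(f(x))$ on $D_1\times E_1$ from the equality $|(S_1f)^{\widehat{}}(\gamma)|=1$, although you invoke Lemma~\ref{lemm} with the probability measure $|f|^p\,d\lambda$ while the paper instead appeals to the H\"older equality criterion (Lemma~\ref{heq}).  The real divergence comes in (2) and (3).  The paper develops a \emph{dual} characterization of $D_1$ as well (using $(S_1f^*)^{\widecheck{}}$ and Lemma~\ref{heq} on $\hk$), namely $x\in D_1$ iff $\gamma(x)\,sgn(\hf(\gamma))$ is a fixed element of $\mathbb{T}$ for $\gamma\in E_1$; with this in hand it sets $H:=\{\widetilde{x}_0\}*D_1$ and verifies $H*H\subseteq H$ directly, deducing (3) as the special case $x_0=e$.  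You instead prove (3) first, observing that $e\in D_1$ forces all $\gamma\in E_1$ to agree on $D_1$, whence $\hf$ is constant on $E_1$ and the Fourier inversion formula together with Lemma~\ref{lemm} on $p_x*p_y$ gives closure of $D_1$ under convolution and involution.  You then reduce (2) to (3) by the translation $g:=p_{\widetilde{x}_0}*f$, using (1) to see $|\widehat{g}|=|\hf|$ and a Jensen sandwich to force $g$ into the equality case.  Your approach avoids the second (dual) application of Lemma~\ref{heq} and the $f^*$ machinery beyond what is already inside Theorem~\ref{main2}, trading it for the observation that the Hausdorff--Young equality class is stable under translation by points of $D_1$; the paper's approach is more symmetric in $K$ and $\hk$ but leans on more of the preceding infrastructure.
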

	\begin{proof}
		Unless mentioned otherwise, we assume the same notations here as in the proof of Theorem \ref{main2}.  In particular, we have:
		$$supp(\widehat{f}) = E_1 = \{\gamma\in \mathcal{S} : |(S_1f)^{\widehat{}}(\gamma)|=1\}.$$
		Now for each $\gamma\in E_1$, set $a_\gamma := (S_1f)^{\widehat{}}(\gamma)= sgn\big((S_1f)^{\widehat{}}(\gamma)\big)\in \mathbb{T}$. Thus for each $\gamma\in supp(\widehat{f})$ we have:
		\begin{eqnarray*}
			\int_{K} (S_1f)(x) a_{\gamma}^{-1} \ \overline{\gamma}(x) \ d\lambda(x) &=&  a_\gamma^{-1} \int_{K} (S_1f)(x) \bar{\gamma}(x) \ d\lambda(x)\\
			&=& a_\gamma^{-1} (S_1f)^{\widehat{}}(\gamma)\\
			&=& |(S_1f)^{\widehat{}}(\gamma)|\\
			&=&  1 \ = \ ||f||_p \ |a_\gamma^{-1}| \ ||\bar\gamma||_\infty \ = \ ||S_1f||_{_1} \  ||a_\gamma^{-1}\bar\gamma||_\infty. 
		\end{eqnarray*}
		Hence it follows from Lemma \ref{heq} that $a_\gamma^{-1}\bar\gamma(x) = sgn(\overline{ S_1f(x)})= sgn(\overline{ f(x)})$, $\lambda$-a.e on $supp(S_1f)=supp(f)$, i.e, we have that $\gamma(x) = a_\gamma^{-1} sgn\big(S_1f(x)\big)$,  for each $x\in supp(f)$, $\lambda$-a.e. In particular, for each $\gamma\in supp(\widehat{f})$,  we must have that $|\gamma(x)|=|sgn\big(S_1f(x)\big)|$, $\lambda$-a.e on $supp(f)$. But $\gamma$ is continuous, and hence $|\gamma(x)|=1$ for each $x\in supp(f)$, as required for the assertion of  $(1)$.

		Recall from Theorem \ref{main2} that without loss of generality, we may assume that both $f$ and $\hat{f}$ are continuous. Moreover,  $f(x)\neq 0$ for each $x\in supp(f)$ and $\widehat{f}(\gamma)\neq 0$ for each $\gamma\in supp(\widehat{f})$. Since $supp(\widehat{f})\subseteq \mathcal{S}$ is non-empty and open, it follows immediately from $(1)$ that for each $x\in supp(f)$, we must have that $||\widehat{p}_{_{\widetilde{x}}}||_{_\infty}=1$. Again, recall that $supp(f) = D_1 = \{x\in K : |(S_1f^*)^{\widecheck{}}(x)|=1\}$.

		For each $x\in D_1$, set $\alpha_x:= (S_1f^*)^{\widecheck{}}(x)= sgn\big((S_1f^*)^{\widecheck{}}(x)\big)\in \mathbb{T}$. Then proceeding similarly as above, we have that
		$$\int_{\hk} (S_1f^*) \alpha_x^{-1}\widehat{p}_{_{\widetilde{x}}}  \ d\pi = \alpha_x^{-1} \int_{\hk} (S_1f^*)(\gamma) \gamma(x) \ d\pi(\gamma) =  ||(S_1f^*)||_{_1} \ ||\alpha_x^{-1}\widehat{p}_{_{\widetilde{x}}}||_{_\infty}.$$
		Hence using Lemma \ref{heq} we have that $\alpha_x^{-1} \gamma(x) = sgn(\overline{ S_1f^*(\gamma)})= sgn(\overline{\widehat{f}(\gamma)})$, $\pi$-a.e on $supp(\widehat{f})$. Furthermore, since both $\widehat{f}$ and the map $(x, \gamma)\mapsto \gamma(x) :K\times \hk \ra \mathbb{C}$ is continuous\cite{BH}, we have that 
		$$\gamma(x) sgn(\widehat{f}(\gamma)) = \alpha_x,$$
		for each $\gamma \in supp(\widehat{f})$, where $\alpha_x\in \mathbb{T}$. On the other hand, if we have that $\gamma(x_0) sgn(\widehat{f}(\gamma))= \alpha$ a.e on $supp(\widehat{f})$ for some $\alpha\in \mathbb{T}$, $x_0\in K$, then we immediately have that
		\begin{eqnarray*}
			(S_1f^*)^{\widecheck{}}(x_0) &=& \int_{\hk} S_1f^*(\gamma) \gamma(x_0) \ d\pi(\gamma) \\
			&=& \alpha \int_{E_1} S_1f^*(\gamma) sgn ({\widehat{f}(\gamma)})^{-1} \ d\pi(\gamma)\\
			&=& \alpha \int_{E_1} S_1f^*(\gamma) sgn (S_1f^*(\gamma))^{-1} \ d\pi(\gamma)\\
			&=& \alpha \int_{E_1} |S_1f^*(\gamma)|  \ d\pi(\gamma) \ = \ \alpha \int_{\hk} |\hf|^{p'} d \pi  \ = \ \alpha.
		\end{eqnarray*}
		Hence $x_0\in D_1$, i.e, we have that $x\in supp(f)$ if and only if there exists some scalar $\alpha_x\in \mathbb{T}$ such that $\gamma(x) sgn(\widehat{f}(\gamma)) = \alpha_x$, $\pi$-a.e on $supp(\hf)$. 
		
		Now pick any $x_0\in supp(f)$ and set $H:=\{\widetilde{x}_0\}* supp(f)$. For any $z\in H$, there exists some $y_z\in supp(f)$ such that $z\in \{\widetilde{x}_0\}*\{y_z\}$. Since $\int_K \gamma \ d(p_{_{\widetilde{x}_0}}*p_{_{y_z}}) = \overline{\gamma(x_0)} \gamma(y_z)\in \mathbb{T}$ for each $\gamma\in supp(\hf)$ by $(1)$, using Lemma \ref{lemm} we see that $\gamma(z)= \overline{\gamma(x_0)} \gamma(y_z)$ for each $\gamma\in supp(\hf)$. Hence in particular, $\gamma(z)\in \mathbb{T}$ for each $z\in H, \gamma\in supp(\hf)$.
		
		Similarly, for each $s\in \{x_0\}*H$, there exists some $z_s\in H$ such that $s\in \{x_0\}*\{z_s\}$, and hence by Lemma \ref{lemm} we must have that $\gamma(s)=\gamma(x_0)\gamma(z_s)=\gamma(y_{z_s})\in \mathbb{T}$ for each $\gamma \in supp(\hf)$ for some $y_{_{z_s}}\in supp(f)$. Hence in particular,   we have that
		$$\gamma(s) sgn(\hf(\gamma)) = \gamma(y_{z_s})sgn(\hf(\gamma)) = \alpha_{y_{_{z_s}}} \in \mathbb{T},$$
		for each $\gamma\in supp(\hf)$, i.e, $s\in supp(f)$. Hence we have that $supp(f)= \{x_0\}*H$ as well, since \begin{eqnarray*}
			supp(f)  \supseteq \ \{x_0\}*H  &=& \{x_0\}*\big(\{\widetilde{x}_0\}*supp(f)\big) \\
			&=& \big(\{x_0\}*\{\widetilde{x}_0\}\big)*supp(f)  \supseteq  \{e\}*supp(f)=supp(f).
		\end{eqnarray*}
		Note that $H$ is open since $supp(f)$ is open in $K$ \cite[Lemma 4.1D]{JE}. Furthermore, since $H$ is compact by construction \cite[Lemma 3.2B]{JE}, in order to show that $H$ is a subhypergroup, it is sufficient to show that $H*H\subseteq H$ \cite{BH}. Pick any $z_1, z_2\in H$. Then $\gamma(s_i)=\gamma(x_0)\gamma(z_i)$ for each $s_i\in \{x_0\}*\{z_i\}$ on $supp(\hf)$, $i=1, 2$. Since   $\{x_0\}*\{z_i\}\subseteq supp(f)$ for $i=1, 2$ as well, we have that there exists scalars $\alpha_1, \alpha_2\in \mathbb{T}$ such that $$\alpha_i = \gamma(s_i) sgn(\widehat{f}(\gamma)) = \gamma(x_0)\gamma(z_i) sgn(\widehat{f}(\gamma)),$$
		for each $\gamma \in supp(\hf)$, $i=1, 2$. Now pick any $z\in \{z_1\}*\{z_2\}$.  By Lemma \ref{lemm} we have that $\gamma(z)= \gamma(z_1)\gamma(z_2)\in \mathbb{T}$ for each $\gamma\in supp(\hf)$. Hence in particular, for each $s\in \{x_0\}*\{z\}$ we have that
		\begin{eqnarray*}
			\gamma(s)	sgn(\widehat{f}(\gamma))&=& \gamma(x_0)\gamma(z) sgn(\widehat{f}(\gamma))\\
			&=& \gamma(x_0)\gamma(z_1)\gamma(z_2) sgn(\widehat{f}(\gamma))\\
			&=& \alpha_1 \alpha_2 \alpha_{x_{_0}}^{-1} \in \mathbb{T},
		\end{eqnarray*}
		for each $\gamma\in supp(\hf)$. Hence $\{x_0\}*\{z\}\subseteq supp(f)$, i.e, $z\in \{\widetilde{x}_0\}*supp(f)=H$ \cite[Lemma 4.1B]{JE}. Since $z\in \{z_1\}*\{z_2\}$ was chosen arbitrarily, we have that $\{z_1\}*\{z_2\}\subseteq H$, i.e, $H*H\subseteq H$, which completes the proof of $(2)$.
		
		Part $(3)$ is immediate by choosing $x_0=e$.
	\end{proof}
	
	\begin{rem}
		If the hypergroup under consideration is strong, i.e, if $\hk$ is a hypergroup as well (see \cite{BH} for details on strong hypergroups), then it can be shown in similar fashion that there exists some $\gamma_0\in \hk$ and a compact open subhypergroup $L\subseteq \hk$ such that $supp(\hf)= \{\gamma_0\}* L$, whenever $f\in L^p(K)$ attains 	equality in Hausdorff-Young for some $1<p<2$.
		
	\end{rem}

	\begin{cor} \label{cor1}
		Let $1<p<2$. If $f\in L^p(K)$ attains equality in Hausdorff-Young, then there exists  some scalar $\alpha\in \mathbb{C}$, $x_0\in K$, $\gamma_0\in \hk$ and a compact open subhypergroup $H \subseteq K$ such that $$f(x) = \alpha \  \gamma_0(x) \ \chi_{_{\{x_0\}*H}}(x),$$
		for each $x\in K$, $\lambda$-a.e, where $\gamma_0(x)\in \mathbb{T}$ for each $x\in \{x_0\}*A$.
	\end{cor}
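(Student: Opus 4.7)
The plan is to show that on its support $f$ has constant modulus and a complex argument proportional to $\gamma_0(x)$ for any choice of $\gamma_0 \in supp(\hf)$. First I would apply \Cref{cor0} to obtain the compact open subhypergroup $H$ and element $x_0 \in K$ with $supp(f) = \{x_0\}*H$, noting that by \Cref{cor0}(1) we also have $|\gamma_0(x)| = 1$ for each $x \in supp(f)$. The computations inside the proof of \Cref{cor0} (the identity $\gamma(x) = a_\gamma^{-1}\, sgn(f(x))$ on $supp(f)$, where $a_\gamma := (S_1f)^{\widehat{}}(\gamma) \in \mathbb{T}$) then yield a constant $a_{\gamma_0} \in \mathbb{T}$ with $sgn(f(x)) = a_{\gamma_0}\, \gamma_0(x)$ for $\lambda$-almost every $x \in supp(f)$.

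The remaining (and main) step is to show that $|f|$ is constant on $supp(f)$. For this I would invoke \Cref{fstarmax}, which gives
\[
\widecheck{f^*}(x) = \|f\|_p^{p'-p}\, |f(x)|^{p-2}\, f(x),
\]
so that $|\widecheck{f^*}(x)| = \|f\|_p^{p'-p}\, |f(x)|^{p-1}$ on $supp(f)$. On the other hand, expanding $\widecheck{f^*}$ directly from the definition $f^* = |\hf|^{p'-2}\hf$ and plugging in the relation $\gamma(x)\, sgn(\hf(\gamma)) = \alpha_x \in \mathbb{T}$ established (for $x \in supp(f)$ and $\gamma \in supp(\hf)$) inside the proof of \Cref{cor0}, one obtains
\[
\widecheck{f^*}(x) \;=\; \alpha_x \int_{\hk} |\hf(\gamma)|^{p'-1}\, d\pi(\gamma).
\]
Since $\hf \in L^{p'}(\hk)$ forces $|\hf|^{p'-1} \in L^1(\hk)$, this integral is a finite positive constant $C$, and thus $|\widecheck{f^*}(x)| = C$ on $supp(f)$. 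Combined with the previous display, this forces $|f|$ to be constant on $supp(f)$.

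Combining the two ingredients, for $x \in supp(f) = \{x_0\}*H$ we obtain $f(x) = |f(x)|\, sgn(f(x)) = c \cdot a_{\gamma_0}\, \gamma_0(x)$ for some constant $c > 0$. Setting $\alpha := c\, a_{\gamma_0}$ yields the desired representation $f(x) = \alpha\, \gamma_0(x)\, \chi_{\{x_0\}*H}(x)$ for $\lambda$-a.e.\ $x \in K$, with $\gamma_0(x) \in \mathbb{T}$ on $\{x_0\}*H$ by \Cref{cor0}(1).

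There is no essential new conceptual difficulty once \Cref{cor0} is in hand; the plan simply extracts the modulus information ``$|f|$ is constant'' from the two independent expressions for $\widecheck{f^*}$. The main care needed is tracking the $\lambda$- and $\pi$-a.e.\ qualifiers through the computation, and verifying that $|\hf|^{p'-1} \in L^1(\hk)$ so the constant $C$ is finite and strictly positive (ensuring $|f|$ is genuinely a positive constant rather than zero).
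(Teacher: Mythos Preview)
Your approach is correct and takes a genuinely different route from the paper's. The paper proves constancy of $|f|$ by revisiting the complexification machinery of \Cref{main2}: for each $v\in\mathbb R$ it shows that $x\mapsto sgn\big(S_{(1+iv)}f(x)\,\bar\gamma_0(x)\big)$ is a constant $c_{(\gamma_0,v)}\in\mathbb T$ on $D_1$, and then compares $v=0$ with some $v_0\neq 0$ to conclude that $|f(x)|^{ipv_0/2}$ is independent of $x$, hence $|f|$ is constant on $D_1$. You instead bypass the variable-$v$ argument entirely by equating the two available expressions for $\widecheck{f^*}$: the explicit formula from \Cref{fstarmax} and the direct integral computed via the identity $\gamma(x)\,sgn(\hf(\gamma))=\alpha_x$ extracted from the proof of \Cref{cor0}. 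This is more self-contained and avoids re-entering the three-lines interpolation machinery; the paper's route, on the other hand, does not rely on quoting internal computations from the proof of \Cref{cor0}.

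One minor correction: the justification ``$\hf\in L^{p'}(\hk)$ forces $|\hf|^{p'-1}\in L^1(\hk)$'' is not valid in general (on an infinite-measure space $L^{p'}$ does not embed into $L^{p'/(p'-1)}$). The correct reason $C<\infty$ here is that $\hf$ has compact support by \Cref{main2}, so $f^*=|\hf|^{p'-2}\hf$ is bounded with compact support; this also ensures $f^*\in L^1(\hk)$, which you need in order to use the pointwise integral formula for $\widecheck{f^*}$ rather than the abstract Hausdorff--Young extension.
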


	\begin{proof}
		Throughout this proof, we use the same notations and objects as in the proof of Theorem \ref{main2}. Recall that the supports of both $f$ and $\widehat{f}$ are compact open sets in $K$ and $\hk$ respectively, and we may assume without loss of generality that both $f$ and $\widehat{f}$ are continuous functions.

		Next, recall that for any $v\in \mathbb{R}$ we have that $|(S_{(1+iv)}f)^{\widehat{}} (\gamma)|=1$ whenever $S_{(1-iv)}f^*(\gamma) \neq 0$. In particular, pick any $\gamma_0\in E_1= \mbox{supp}(\widehat{f})$. Then $S_{(1-iv)}f^*(\gamma_0) \neq 0$. Hence we have the following set of inequalities:
		\begin{eqnarray*}
			1 \ = \ |(S_{(1+iv)}f)^{\widehat{}}(\gamma_0)| &=& \Big{|} \int_K S_{(1+iv)}f(x) \bar{\gamma}_0(x) \ d\lambda(x) \Big{|}\\
			&\leq & \int_{D_1} \Big{|}  S_{(1+iv)}f(x) \bar{\gamma}_0(x)\Big{|} \ d\lambda(x)\\
			&\leq & ||S_{(1+iv)}f \bar{\gamma}_0||_1 \ ||\chi_{_{D_1}}||_\infty\\
			&=& \int_K |S_{(1+iv)}f|(x) |\bar{\gamma}_0|(x) \ d\lambda(x)\\
			&\leq & ||S_{(1+iv)}f||_1 \ ||\bar{\gamma}_0||_\infty = ||f||_p^p \ = \ 1,
		\end{eqnarray*}
		where the second and third inequality follows from H\"older's inequality. Hence in particular, we must have that 
		$$ \Big{|} \int_{D_1} S_{(1+iv)}f(x) \bar{\gamma}_0(x)\ d\lambda(x)\Big{|}  = ||S_{(1+iv)}f \bar{\gamma}_0||_1 \ ||\chi_{_{D_1}}||_\infty = 1. $$
		Using the criterion described in Lemma \ref{heq}, since $f$ is continuous on $D_1$ and $f(x) \neq 0$ for each $x\in D_1$, we have that there exists some scalar $c_{(\gamma_0, v)} \in \mathbb{T}$ such that
		$$ sgn \Big{(}S_{(1+iv)}f(x) \bar{\gamma}_0(x)\Big{)} = c_{(\gamma_0, v)}.$$
		Note that the scalar is independent of the choice of $x$ in $D_1$. 
		
		In particular, taking $v=0$ we get that 
		\begin{equation} \label{E5}
		c_{(\gamma_0, 0)} = sgn  \Big{(}S_{1}f(x) \bar{\gamma}_0(x)\Big{)} = sgn(f(x)) \ sgn(\bar{\gamma}_0(x)),
		\end{equation}
		for each $x\in D_1$. Now, pick some $v_0 \in \mathbb{R}\setminus \{0\}$. Then for each $x\in D_1$ we have that $sgn(S_{(1+iv_0)}f) = |f|^{i\frac{pv_0}{2}} sgn(f)$ and hence
		\begin{eqnarray*}
			c_{(\gamma_0, v_0)} &=& sgn \Big{(}S_{(1+iv_0)}f(x) \bar{\gamma}_0(x)\Big{)}\\
			&=& |f(x)|^{i\frac{pv_0}{2}} sgn(f(x)) sgn(\bar{\gamma}_0(x))\\
			&=& |f(x)|^{i\frac{pv_0}{2}} c_{(\gamma_0, 0)},
		\end{eqnarray*}
		where the last equality follows from Equation \ref{E5}. Hence for any $x\in D_1$, we have that $$|f(x)|^{i\frac{pv_0}{2}} = c_{(\gamma_0, v_0)} \ c_{(\gamma_0, 0)}^{-1}.$$
		Setting $\alpha_0 := \Big{(}\frac{c_{(\gamma_0, 0)}}{c_{(\gamma_0, v_0)}}\Big{)}^{\frac{2i}{pv_0}}$ we have that $|f| \equiv \alpha_0$ on $D_1$.  Since $sgn (\bar{\gamma}_0(x)) = \Big{(} sgn(\gamma_0(x))\Big{)}^{-1}$ for any $x\in D_1$, we have the following structure of $f$ on $K$:
		\begin{eqnarray*}
			f(x) &=& |f(x)| \ sgn(f(x))\\
			&=& \alpha_0 c_{(\gamma_0, 0)} \ sgn(\gamma_0(x)) \ = \ \alpha  sgn(\gamma_0(x)),
		\end{eqnarray*}
		setting $\alpha := \alpha_0 c_{(\gamma_0, 0)} $, where the second equality follows from Equation \ref{E5}. Finally, since $f(x) = 0$ for any $x\in D_1^c$, the conclusion follows from Corollary \ref{cor0}, setting $H = \{\widetilde{x}_0\}*D_1$ for some $x_0\in D_1$.  
	\end{proof} 
	
	\begin{rem}
		In the classical theory of locally compact abelian groups, we know\cite{HH} that  for $1<p<2$, a function $f\in L^p(G)$ attains equality in Hausdorff-Young if and only if   $f(x) = \alpha  \gamma_0(x)  \chi_{_{x_0B}}(x)$ on $ G$, for  some scalar $\alpha\in \mathbb{C}$, $\gamma_0\in \widehat{G}$, $x_0\in G$ and a compact open subgroup $B \subseteq G$. 
		
		In the setting of hypergroups, we immediately observe that every function of such form $f = \alpha  \gamma_0  \chi_{_{\{x_0\}*B}}$ will not attain equality in Hausdorff-Young, where $\alpha\in \mathbb{C}, \gamma_0\in \hk, x_0\in K$ are chosen arbitrarily and $B$ is any compact open subhypergroup of $K$. This is true since there exists $\gamma_0\in \hk$ such that  $|\gamma_0|\nequiv 1$ on $\{x_0\}*B$, unless $K$ is a locally compact abelian group (consider the simple examples \cite[9.1B, 9.1C, 9.1D]{JE}).

		
	\end{rem}
	
	Now keeping the findings of Corollary \ref{cor1} in mind, an obvious candidate for a function that attains equality in Hausdorff-Young, would be a characteristic function supported on an appropriate subset of $K$. In particular, we have the following observation.

	\begin{thm} \label{main1}
		Let $A$ be a compact open subhypergroup of $K$. Then $\chi_{_A}$ attains equality in Hausdorff-Young for each $p\in [1, 2]$.
	\end{thm}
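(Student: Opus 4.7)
The plan is to establish two explicit identities and combine them. First, I would show that
\begin{equation*}
\widehat{\chi_{_A}}(\gamma) \ = \ \lambda(A)\, \chi_{_{N(A)}}(\gamma) \qquad \text{for every } \gamma \in \hk,
\end{equation*}
and then apply the Plancherel identity to deduce the duality relation $\pi(N(A))\, \lambda(A) = 1$. Granted these two identities, the equality $\|\widehat{\chi_{_A}}\|_{p'} = \|\chi_{_A}\|_p$ for every $p\in [1,2]$ follows from a direct computation of both norms.

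For the Fourier transform identity, I would exploit the fact that $A$ is a compact subhypergroup, so that $A$ is a compact commutative hypergroup in its own right and the restriction $\lambda|_{_A}$ is a left-invariant positive measure on $A$; by uniqueness of Haar measure on compact hypergroups, $\lambda|_{_A} = \lambda(A)\, m_A$, where $m_A$ is the normalized Haar measure on $A$. If $\gamma \in N(A)$, then $\widehat{\chi_{_A}}(\gamma) = \int_A \overline{\gamma}\, d\lambda = \lambda(A)$ immediately. If $\gamma \notin N(A)$, then combining the multiplicativity $\gamma(y*x)=\gamma(y)\gamma(x)$ with the left invariance of $m_A$ yields, for any $y\in A$,
\begin{equation*}
\int_A \overline{\gamma}\, dm_A \ = \ \int_A \overline{\gamma(y*x)}\, dm_A(x) \ = \ \overline{\gamma(y)} \int_A \overline{\gamma}\, dm_A,
\end{equation*}
so picking $y\in A$ with $\gamma(y)\neq 1$ forces $\int_A \overline{\gamma}\, dm_A = 0$, i.e., $\widehat{\chi_{_A}}(\gamma) = 0$.

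Applying Plancherel to $\chi_{_A}\in L^1(K)\cap L^2(K)$ then gives $\lambda(A) = \int_{\hk} |\widehat{\chi_{_A}}|^2\, d\pi = \lambda(A)^2\, \pi(N(A))$, hence $\pi(N(A)) = \lambda(A)^{-1}$. For any $p\in [1,2]$ one now checks
\begin{equation*}
\|\chi_{_A}\|_p \ = \ \lambda(A)^{1/p}, \qquad \|\widehat{\chi_{_A}}\|_{p'} \ = \ \lambda(A) \cdot \pi(N(A))^{1/p'} \ = \ \lambda(A)^{1-1/p'} \ = \ \lambda(A)^{1/p},
\end{equation*}
with the endpoint $p=1$ (interpreting $1/p'=0$) handled by $\|\widehat{\chi_{_A}}\|_\infty = \lambda(A)$, and the endpoint $p=2$ coinciding with Plancherel itself. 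The main subtlety I anticipate is making sure that the supporting hypergroup-theoretic facts are invoked correctly: that continuous characters of $K$ restrict to continuous characters of the compact subhypergroup $A$ (so the vanishing argument for $\int_A \overline{\gamma}\, dm_A$ is valid), that $\lambda|_{_A}$ is genuinely a left Haar measure on $A$ so uniqueness of Haar measure applies, and, in the $p=1$ case, that $N(A)\cap \mathcal{S}$ has positive $\pi$-measure (which is automatic from $\pi(N(A))=\lambda(A)^{-1}>0$). Once these standard points are cited cleanly, the rest of the argument is purely computational.
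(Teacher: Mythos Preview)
Your proposal is correct and follows essentially the same approach as the paper: compute $\widehat{\chi_{_A}} = \lambda(A)\,\chi_{_{N(A)}}$, use Plancherel to obtain $\pi(N(A)) = \lambda(A)^{-1}$, and then verify the norm equality directly. The only cosmetic difference is in the justification of $\widehat{\chi_{_A}}(\gamma)=0$ for $\gamma\notin N(A)$: the paper argues on $K$ itself, showing $L_a\chi_{_A}=\chi_{_A}$ for $a\in A$ via the set-theoretic fact that $\{a\}*\{x\}\subseteq A$ iff $x\in A$ (and is disjoint from $A$ otherwise), whereas you restrict to $A$ and invoke uniqueness of Haar measure on the compact subhypergroup---both routes are standard and amount to the same invariance argument.
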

	
	\begin{proof}
		For any $\gamma\in \widehat{K}$, we have
		$$\widehat{\chi}_{_A}(\gamma)= \int_K\overline{\gamma(x)}\chi_{_A}(x) \ d\lambda(x).$$
		If $\gamma\in N(A)$, then $\widehat{\chi}_{_A}(\gamma)=\lambda(A)>0$.
		
		Pick any $a\in A, x\in K$. Note that if $x\in A$, then $\{a\}*\{x\}\subseteq A$. On the other hand, if $\{a\}*\{x\}\subseteq A$, then we have that $x\in \{\widetilde{a}\}*A$ \cite[Lemma 4.1B]{JE}, and hence $x\in A$, since $A$ is a subhypergroup. Thus, $\{a\}*\{x\}\subseteq A$ if and only if $x\in A$. In fact using the same result we can conclude that $x\in \hspace {-0.12in}/\ A = \{\widetilde{a}\}*A$ if and only if $(\{a\}*\{x\}) \cap A = \O$. Hence for any $a\in A$, we have that $L_a\chi_{_A}=\chi_{_A}$.
		
		Now, let $\gamma \in \hspace {-0.11in}/\  N(A)$.Then there exists some $a\in A$ such that $\gamma(a)\neq 1$. Then we have
		\begin{eqnarray*}
			\widehat{\chi}_{_A}(\gamma) &=& \int_K \overline{\gamma(x)} \chi_{_A}(x) \ d\lambda(x)\\
			&=& \int_K \overline{\gamma(a*x)} \chi_{_A}(a*x) \ d\lambda(x)\\
			&=& \overline{\gamma(a)} \int_K \overline{\gamma(x)} \chi_{_A}(x) \ d\lambda(x)\\
			&=& \overline{\gamma(a)} \widehat{\chi}_{_A}(\gamma).
		\end{eqnarray*}
		But we know that $\gamma(a)\neq 1$. Hence we must have that $\widehat{\chi}_{_A}(\gamma)=0$, \textit{i.e,} we have that $\widehat{\chi}_{_A} = \lambda(A)   \chi_{_{N(A)}}$.	Recall that the annihilator of any subset of $K$ is always a closed subset of $\widehat{K}$. In particular, $N(A)$ is a compact open subset of $\widehat{K}$, since $\widehat{\chi}_{_A} \in C_0(\widehat{K})$. For $p=2$, we have that	
		$$||\widehat{\chi}_{_A}||_{_2}^2 = \lambda(A)^2\int_K |\chi_{_{N(A)}}|^2 \ d\pi = \lambda(A)^2\pi(N(A)).$$
		It follows from the Levitan-Plancherel Theorem for hypergroups \cite{BH} that $||\widehat{\chi}_{_A}||_{_2} = ||\chi_{_A}||_{_2}$. Hence we have	
		\begin{eqnarray*}
			\lambda(A)^2\pi(N(A))&=& ||\chi_{_A}||_{_2}^2\ \  = \ \  \lambda(A)\\
			\Rightarrow \pi(N(A)) &=& \lambda(A)^{-1}.
		\end{eqnarray*}
		
		Thus for any $p\in (1, 2)$  we have:
		\begin{eqnarray*}
			||\widehat{\chi}_{_A}||_{p'} &=& \lambda(A)\big{(}\int_K |\chi_{_{N(A)}}|^{p'} \ d\pi\big{)}^{1/{p'}} \\
			&=& \lambda(A) \pi(N(A))^{1/{p'}}\\
			&=& \lambda (A) \lambda(A)^{-1/{p'}}\\
			&=& \lambda(A)^{1/p} = ||\chi_{_A}||_p \ .
		\end{eqnarray*}
		The case for $p=1$ follows immediately from the structure of $\widehat{\chi}_{_A}$.
	\end{proof}

	Now that we have sufficient insight into the structure of a function that attains equality in Hausdorff-Young, we proceed further towards exploring the connection between the  basic uncertainty principle and the best constant achieved in the Hausdorff-Young inequality for $1<p<2$. The following result confirms that if $K\in PW_H^c$ such that $\lambda(Z(K))>0$, then $1$ is the best constant in the Hausdorff-Young inequality  for each $p\in (1, 2)$. The proof of the result is inspired by techniques used in \cite{LV} and \cite{TA}. 
	
	\begin{thm} \label{main3}
		Let $K$ be a commutative hypergroup such that $K\notin PW_H$ and $\lambda(Z(K))>0$. Then $K$ admits a non-trivial function $f\in L^p(K)$ such that $f$ attains equality in Hausdorff-Young, for each $p\in (1, 2)$.
	\end{thm}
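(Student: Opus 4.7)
The overall plan is to exploit Theorem \ref{main1} by constructing a compact open subhypergroup $A\subseteq K$, since then $\chi_{_A}$ is a non-trivial function attaining equality in Hausdorff--Young for \emph{every} $p\in (1,2)$. Thus the task reduces to producing such a subhypergroup from the given assumptions.

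First, since $K\notin PW_H$, fix a non-trivial $f_0$ on $K$ with $\mathrm{supp}(f_0)$ and $\mathrm{supp}(\widehat{f_0})$ both compact. Convolving $f_0$ with a suitable non-negative element of $C_c(K)$ (and using $\widehat{\phi *f_0}= \widehat{\phi}\widehat{f_0}$) we may assume $f_0 \in C_c(K)\cap L^2(K)$ and retain that $\widehat{f_0}$ is compactly supported on the finite-Plancherel-measure set $E:=\mathrm{supp}(\widehat{f_0})\subset \mathcal{S}$. Next, since $Z(K)$ is a locally compact abelian group with $\lambda(Z(K))>0$, pick a symmetric relatively compact open neighborhood $U$ of $e$ with $U\subseteq Z(K)$ and $\lambda(U)>0$, and form the open subhypergroup $H:=\langle U\rangle$ as in \Cref{subgen}.

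Now consider the family $\mathcal{F}:=\{\mu * f_0 : \mu \in M_0(H)\}\subset L^2(K)$. For any $\mu\in M_0(H)$, the Fourier--Stieltjes transform $\widehat{\mu}$ takes values in $\mathbb{T}$ on the generators $U\subseteq Z(K)$ (since $|\gamma(x)|=1$ for all $x\in Z(K)$, $\gamma\in \hk$), hence extends to a bounded function on $\hk$, and $\widehat{\mu * f_0} = \widehat{\mu}\widehat{f_0}$ is supported on the compact set $E$ of finite $\pi$-measure. Via the Plancherel isometry, the closed linear span of $\mathcal{F}$ embeds into $L^2(E,\pi)$, and one checks that it is actually finite-dimensional by exploiting the orbit structure of $\widehat{f_0}\cdot \{\widehat{\mu}|_E : \mu \in M_0(H)\}$ and the boundedness of the characters on $Z(K)$. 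This is the key step inspired by \cite{LV, TA}: the finite-dimensionality of a closed translation-invariant subspace of $L^2(K)$ containing $f_0\neq 0$, combined with the fact that $H$-translations act on it via the left regular representation $T$ restricted to $M_0(H)$, forces $H$ to be compact by a standard representation-theoretic argument (a non-compact locally compact space cannot act on a non-zero vector with finite-dimensional orbit closure in $L^2$ via the regular representation).

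Once $H$ is established to be compact (while already open and a subhypergroup by construction), \Cref{main1} directly provides that $\chi_{_H}\in L^p(K)\setminus\{0\}$ attains equality in Hausdorff--Young for each $p\in (1,2)$, completing the proof. The main obstacle is the passage from finite-dimensionality of $\mathrm{span}(\mathcal{F})$ to compactness of $H$: in the locally compact abelian setting this exploits Pontryagin duality, but here one must work with the hypergroup convolution and the fact that $\hk$ need not itself be a hypergroup, using the assumption $\lambda(Z(K))>0$ crucially to ensure the neighborhood $U$ can be chosen inside the genuinely group-like part of $K$ where characters are unitary and translation is well-behaved.
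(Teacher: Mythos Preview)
Your overall strategy---produce a compact open subhypergroup $H$ and invoke \Cref{main1}---is exactly the paper's, and so is the idea of locating $H$ inside $Z(K)$ via the hypothesis $\lambda(Z(K))>0$. However, the two central steps are not adequately justified.

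First, the finite-dimensionality of $\mathrm{span}(\mathcal{F})$ does \emph{not} follow from the Plancherel embedding into $L^2(E,\pi)$: that space is typically infinite-dimensional, and ``orbit structure'' together with ``boundedness of characters on $Z(K)$'' does not cut it down. The paper establishes this via a Hilbert--Schmidt argument (Lemma~\ref{lemfd}): one chooses $\phi\in C_c(K)$ identically $1$ on $C*\,supp(f)$ and $\psi\in C_c(\hk)$ identically $1$ on $supp(\widehat{f})$, and checks that the compact integral operator $\Omega(h)=\widecheck{\psi}*(\phi h)$ acts as the identity on $V_C=LS\{T_\mu f:\mu\in M(C)\}$ for any compact neighborhood $C$ of $e$. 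Crucially, passing from $M(U)$ to the a priori larger $M_0(\langle U\rangle)$ requires a minimality trick: shrink to a neighborhood $W_0$ of $e$ on which $\dim V_W$ is minimal among $W\subseteq W_0$, then choose a symmetric $U_0$ with $U_0*U_0\subset W_0$; minimality forces $T_\mu(V_{U_0})\subseteq V_{U_0}$ for every $\mu\in M(U_0)$, which bootstraps by iteration to all of $M_0(\langle U_0\rangle)$. You pick $U$ first and assert finite-dimensionality for $M_0(\langle U\rangle)$ directly, but without this invariance step that claim is unjustified.

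Second, the passage from finite-dimensionality to compactness of $H$ is not quite ``standard'' here and needs a concrete argument. The paper builds the matrix-coefficient function $F(x)=\sum_{k=1}^d a_{1,k}(p_x)a_{k,1}(p_{\widetilde{x}})$ of the representation of $M_0(H)$ on the finite-dimensional space $V_0$; each $a_{i,j}$ lies in $C_0(K)$, hence $F\in C_0(K)$. Since $H\subseteq Z(K)$ (this is where $\lambda(Z(K))>0$ enters, via Steinhaus' theorem to obtain the initial compact neighborhood $C\subset Z(K)$ and hence $U_0\subset Z(K)$), every $T_x$ with $x\in H$ is an isometry, so $F(x)=a_{1,1}(p_x*p_{\widetilde{x}})=\|T_x e_{n_1}\|_2^2=1$ on all of $H$. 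A function in $C_0(K)$ that is identically $1$ on $H$ forces $H$ to be compact. Your one-line appeal to a ``standard representation-theoretic argument'' is essentially this, but in the hypergroup setting---where $T_x$ is generally only norm-decreasing---one must explicitly use $H\subseteq Z(K)$ to get the isometry, and the $C_0$-coefficient construction to conclude.
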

	
	In order to prove the above result, keeping Theorem \ref{main1} in mind, we first investigate and look for an open subhypergroup $H$ of $K$ such that the translates of a certain  $f \in C_c(K)$ by a `large' subspace of $M(H)$ cannot  be `too large' in $L^2(K)$ (in the sense of the lemma below). 
	This proves to be fruitful since one can then use the co-efficient functions of the left-regular representation of $K$ to construct a function in $C_0(K)$, which is bounded below on $H$. The following lemma provides such an open subhypergroup for any commutative hypergroup $K$ which does not obey the basic uncertainty principle. This phenomenon is of independent interest as well. 
	
	\begin{lem} \label{lemfd}
		Let $K$ be a commutative hypergroup that admits a non-zero function $f$ on $K$ such that both $f$ and $\widehat{f}$ have compact support in $K$ and $\hk$ respectively. Then the following statements hold true.
		\begin{enumerate}
			\item For any compact neighborhood $C$ of $e$, the linear span of $\{T_\mu f : \mu \in M(C)\}$ is a finite dimensional subspace of $L^2(K)$.
			\item There exists an open subhypergroup $H= \langle U_0\rangle$ of $K$  where $U_0$ is a symmetric open neighborhood of $e$, such that the linear span of $\{T_\mu f : \mu \in M_0(\langle U_0\rangle)\}$ is a finite dimensional subspace of $L^2(K)$. Moreover, we have  $$dim\Big(LS\big(\{T_\mu f : \mu \in M_0(\langle U_0\rangle)\}\big)\Big)= dim \Big(LS\big(\{T_\mu f : \mu \in M(U_0)\}\big)\Big).$$
		\end{enumerate}	
	\end{lem}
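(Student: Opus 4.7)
For (1), the idea is to trap every translate $T_\mu f = \mu*f$ with $\mu\in M(C)$ inside a common ``space-and-frequency localized'' subspace and show that this localized subspace is finite-dimensional. Set $A := C * \mbox{supp}(f)\subseteq K$ and $B := \mbox{supp}(\widehat{f})\subseteq \widehat{K}$, both compact, and consider
\[ V \ := \ \{\,g \in L^2(K) : \mbox{supp}(g)\subseteq A,\ \mbox{supp}(\widehat{g})\subseteq B\,\}. \]
Since $\mbox{supp}(\mu*f)\subseteq C*\mbox{supp}(f)$ and $\widehat{\mu*f} = \widehat{\mu}\,\widehat{f}$, we obtain $\{T_\mu f : \mu \in M(C)\} \subseteq V$. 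I would then prove $\dim V < \infty$ by introducing the operator $P_A Q_B P_A$, where $P_A g := \chi_A g$ and $Q_B g := (\chi_B \widehat{g})^{\widecheck{}}$ (equivalently $g*\widecheck{\chi_B}$ via \Cref{remhy}(2)), and writing it as an integral operator on $L^2(K)$ with kernel
\[ k(x,y) \ = \ \chi_A(x)\chi_A(y)\int_B \gamma(x)\,\overline{\gamma(y)}\,d\pi(\gamma). \]
The kernel is bounded in modulus by $\pi(B)\chi_A(x)\chi_A(y)$, hence $k\in L^2(K\times K)$ since $\lambda(A),\pi(B)<\infty$, so $P_A Q_B P_A$ is Hilbert-Schmidt. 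Every $g\in V$ satisfies $P_A g = g$ and $Q_B g = g$, hence $P_A Q_B P_A g = g$; thus $V$ lies in the $1$-eigenspace of a compact operator and is finite-dimensional, giving (1).

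For (2), abbreviate $W_C := LS(\{T_\mu f : \mu \in M(C)\})$; by (1) each such $W_C$ is finite-dimensional, and $C\subseteq C'$ forces $W_C \subseteq W_{C'}$. Let $m := \inf \dim W_C$ where $C$ ranges over compact symmetric neighborhoods of $e$, and pick some compact symmetric neighborhood $V_0$ of $e$ attaining $m$. Then for every compact symmetric neighborhood $C\subseteq V_0$ of $e$ we have $W_C \subseteq W_{V_0}$ with $\dim W_C = \dim W_{V_0}=m$, forcing $W_C = W_{V_0}$. Using the joint continuity of the convolution map (axiom A4), choose an open symmetric neighborhood $U_0$ of $e$ with $\overline{U_0}\subseteq V_0$ and $U_0*U_0\subseteq V_0$; in particular $W_{U_0} = W_{V_0}$. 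The key invariance claim is then that $W_{U_0}$ is $T_\nu$-invariant for every $\nu\in M(U_0)$: for $g = \sum_i c_i T_{\mu_i}f \in W_{U_0}$ with $\mu_i\in M(U_0)$, one has $T_\nu g = \sum_i c_i T_{\nu*\mu_i}f$, and $\nu*\mu_i \in M(U_0*U_0)\subseteq M(V_0)$, so $T_\nu g\in W_{V_0}=W_{U_0}$. Iterating, every $T_{\mu_1*\cdots*\mu_n}f = T_{\mu_1}\cdots T_{\mu_n}f$ with $\mu_i \in M(U_0)$ lies in $W_{U_0}$; this gives $LS(\{T_\mu f : \mu\in M_0(\langle U_0\rangle)\})\subseteq W_{U_0}$, while the reverse inclusion is immediate from $M(U_0)\subseteq M_0(\langle U_0\rangle)$. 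Hence the two spans coincide, which yields both the finite-dimensionality and the stated dimension equality.

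The principal technical obstacle sits in part (1): justifying the integral-kernel form of $P_A Q_B P_A$ cleanly inside the hypergroup Plancherel framework requires invoking the $L^2$-extension of the Fourier transform together with \Cref{remhy}(2) and Fubini, with extra care because $\mbox{supp}(\pi)=\mathcal{S}$ may be a proper subset of $\widehat{K}$ and hypergroup characters need not have modulus $1$ off $Z(K)$ (although on $\mathcal{S}$ the uniform bound $|\widehat{\mu}|\leq \|T_\mu\|$ is available and amply suffices to dominate the kernel). By contrast, the stabilization and invariance argument in (2) is essentially algebraic and follows almost automatically once (1) and axiom A4 are in hand.
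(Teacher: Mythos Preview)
Your proposal is correct and follows essentially the same route as the paper: for (1) the paper also traps the translates in the $1$-eigenspace of a Hilbert--Schmidt integral operator built from a space cutoff on $C*\mbox{supp}(f)$ and a frequency cutoff on $\mbox{supp}(\widehat{f})$ (the paper uses Urysohn bumps $\phi,\psi$ and the single operator $h\mapsto \widecheck{\psi}*(\phi h)$ rather than your sharp cutoffs and $P_AQ_BP_A$, but this is cosmetic), and for (2) it runs exactly your minimal-dimension stabilization plus the $U_0*U_0\subset W_0$ invariance iteration.
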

	
	\begin{proof}
		Let $C$ be a compact neighborhood of $e$ in $K$. 
		Note that if $x\notin C*  supp(f)$, then in particular for each $y\in C$, we would have that $x\notin \{{y}\}*  supp(f)$ and hence by \cite[Lemma 4.1B]{JE}, $\big{(}\{x\}*\{\widetilde{y}\}\big{)} \cap  supp(f) = \O$, \textit{i.e}, $L_{\widetilde{y}}f(x)=0$. Since any $\mu \in M(C)$ can be perceived as a measure $\mu \in M(K)$ with $supp(\mu)\subseteq C$, we have that $$(\mu *f)(x) = \int_K L_{\widetilde{y}} f(x) \ d\mu(y)=0,$$
		for any $\mu \in M(C)$.	
		Hence for each $\mu \in M(C)$, we have that $supp(T_\mu f) = supp(\mu*f) \subseteq C*  supp(f)$, where $C*  supp(f)$ is compact since both $C$ and  $supp(f)$ are compact \cite{JE}. Now using Urysohn's Lemma, we can find a function $\phi\in C_c(K)$ such that $\phi \equiv 1$ on $C*  supp(f)$, \textit{i.e,}   $\phi  T_\mu f=T_\mu f$ for each $\mu \in M(C)$.
		
		Similarly, since $(T_\mu f)^{\widehat{}}(\gamma) = (\mu * f)^{\widehat{}}(\gamma)=\widehat{\mu}(\gamma)\widehat{f}(\gamma)$ for each $\gamma\in\hk, \mu \in M(K)$, in particular, we have that $supp\big{(}(T_\mu f)^{\widehat{}}\:\big{)} \subseteq supp(\widehat{f})$ for each $\mu \in M(K)$. Since $supp(\widehat{f})$ is compact, using Urysohn's Lemma similarly on $\hk$, we can find a function $\psi\in C_c(\hk)$ such that $\psi(T_\mu f)^{\widehat{}} = (T_\mu f)^{\widehat{}}$ for each $\mu \in M(K)$. Next, define a function $\Omega: L^2(K) \ra L^2(K)$ by
		$$\Omega(h) := \widecheck{\psi}*(\phi h),   $$
		for each $h\in L^2(K)$. First note that since $\psi\in C_c(\hk)\subset L^2(\hk)$,  we have that $\widecheck{\psi} \in L^2(K)$ and hence $\Omega(h) \in L^2(K)$ as $\phi h \in L^1(K)$ for any $h\in L^2(K)$. Thus $\Omega$ is a well-defined bounded linear operator on $L^2(K)$. Consider the function $\rho : K \times K \ra \mathbb{C}$ given by	$$\rho(x, y) := \widecheck{\psi}(x*y) \widetilde{\phi}(y),$$
		for each $(x, y) \in K\times K$. Since $supp(\widetilde{\phi}) = \big{(}supp(\phi){\widetilde{\big{)}}}$ is compact and $\widecheck{\psi}\in C_0(K)\cap L^2(K)$, we have that $\rho\in L^2(K\times K)$. For each $h\in L^2(K), x\in K$, the operator $\Omega$ can be expressed as: 
		$$ \Omega(h)(x) = \int_K \rho(x, y)\widetilde{h}(y) \ d\lambda(y).$$
		Since the operator $i:L^2(K) \ra L^2(K)$ defined as $i(h):= \widetilde{h}$ is a bounded linear operator on $L^2(K)$, it follows immediately that $\Omega$ is a compact operator on $L^2(K)$.	Now for any neighborhood $E$ of $e$ in $K$, denote by $V_E$ the linear subspace of $L^2(K)$ spanned by the set $\{T_\mu f : \mu\in M(E)\}$. In particular, for each $\mu\in M(C)$, we have that
		\begin{eqnarray*}
			\Omega(T_\mu f) &=& \widecheck{\psi}*(\phi T_\mu f)\\
			&=& \widecheck{\psi} * T_\mu f\\
			&=& \big{(} \psi (T_\mu f)^{\widehat{}}  \ \big{)}^{\widecheck{}}\\
			&=& \big{(} (T_\mu f)^{\widehat{}} \ \big{)}^{\widecheck{}} \ = \ T_\mu f,
		\end{eqnarray*}
		where the second and fourth equality follows from the choice of $\phi$ and $\psi$ respectively, and both the third and last equality follows from Remark \ref{remhy}.
		
		Thus the compact operator $\Omega$ coincides with the identity operator on $V_{C}$. Hence the vector space $V_{C}$ must be finite dimensional. For any  neighborhood $W\subseteq C$ of $e$, we have that $V_W\subseteq V_{C}$ and hence $V_W$ has non-zero finite dimension. 
		
		Therefore, we can find a neighborhood $W_0\subseteq C$ of $e$ in  $K$,  such that $dim(V_{W_0})$ is minimal in nature, i.e, $V_{W_0}= V_W $ for any neighborhood $W$ of $e$ such that $W\subseteq W_0$. Since the involution map $x\mapsto \widetilde{x}$ is a homeomorphism, using \cite[Lemma 3.2D]{JE} we can easily find an open neighborhood $U_0$ of $e$ such that $U_0=\widetilde{U}_0$ and $U_0*U_0\subset W_0$. In particular,  by minimality of $W_0$, we have that  $V_{U_0} = V_{W_0}$. Again, for any two measures $\mu, \nu \in M(U_0)$, we have  
		$$ T_\mu(T_\nu f) = T_{\mu*\nu} (f) \in V_{W_0}  =  V_{U_0},$$
		as both $supp(\mu)$ and $supp(\nu)$ are compact in $K$ and hence $\mu *\nu \in M(W_0)$ since $supp(\mu *\nu) = supp(\mu) * supp(\nu) \subseteq U_0 * U_0 \subset W_0$. 
		Thus for any $\mu \in M(U_0)$, we have that $T_\mu(V_{U_0}) \subseteq V_{U_0}$. 
		
		Now, let $H:= \langle U_0 \rangle$. First note that since $H\supset U_0$  has non-empty interior, $H$ is an open subhypergroup of $K$ \cite{BH}.  Moreover, let $\mu= \mu_1 * \mu_2 *\cdots *\mu_n$ for some $n\in \mathbb{N}$ where $\mu_i\in M(U_0)$ for $1\leq i \leq n$. Using the invariance derived above, we have 
		\begin{eqnarray*}
			T_\mu f &=& T_{\mu_1 * \mu_2 *\cdots *\mu_n} (f)\\
			&=&  T_{\mu_1}\circ  T_{\mu_2} \circ \ldots \circ (T_{\mu_{n-1}}\circ T_{\mu_n})(f)\\
			&\in & T_{\mu_1}\circ  T_{\mu_2} \circ \ldots \circ T_{\mu_{n-2}}\big{(} V_{U_0}\big{)}\\
			&\subseteq & T_{\mu_1}\circ  T_{\mu_2} \circ \ldots \circ T_{\mu_{n-3}}\big{(} V_{U_0}\big{)} \subseteq \cdots \subseteq T_{\mu_1}(V_{U_0}) \subseteq V_{U_0}.
		\end{eqnarray*}
		Thus we have that $LS\big(\{T_\mu f : \mu \in M_0(\langle U_0\rangle)\}\big) \subseteq V_{U_0}$. Since $M(U_0)\subseteq M_0(\langle U_0\rangle)$, we immediately have that $LS\big(\{T_\mu f : \mu \in M_0(\langle U_0\rangle)\}\big)= V_{U_0} = V_{W_0}$, and therefore $$0<dim\Big(LS\big(\{T_\mu f : \mu \in M_0(\langle U_0\rangle)\}\big)\Big)= dim \big(V_{U_0}\big)<\infty.$$ 
	\end{proof}
	The above lemma immediately implies that if $K\notin PW_H$ and $f\in C_c(K)$ is a non-zero function such that $\widehat{f}\in C_c(\hk)$ as well, then there exists an open subhypergroup $H=\langle U_0\rangle$ of $K$ such that the linear span of $\{T_\mu f : \mu \in M_0(H)\}$ coincides with the linear span of $\{L_xf : x\in U_0\}$, which is finite dimensional. Now we return to the proof of \Cref{main3} as discussed before.

	\noi
	\textbf{Proof of \Cref{main3}:}

	
	We assume the same notations here as in the proof of Lemma \ref{lemfd}. Since $K\notin PW_H$, there exists a non-zero function $f\in C_c(K)$ such that $\widehat{f}\in C_c(\hk)$. Hence given any compact neighborhood $C$ of $e$ in $K$, using \Cref{lemfd} we can find a symmetric open neighborhood $U_0\subset C$ of $e$ in $K$ such that $H= \langle U_0 \rangle$ is an open subhypergroup of $K$, and the linear span of $\{T_\mu f : \mu \in M_0(H)\}$ is finite dimensional.  Set $V_0:=LS\big{(} \{T_\mu f : \mu \in M_0(H)\} \big{)}$, and let $d= dim(V_0)=dim(V_{U_0})$.

	Now, let $\{e_i\}_{i\in I}$ be an orthonormal basis for $L^2(K)$ such that there exists $n_i\in \mathbb{N}$, $1\leq i\leq d$ such that $\{e_{n_i} : 1\leq i \leq d\}$ is a basis for $V_0$. For each $i,j \in \{1, 2, \ldots, d\}$ we define the co-efficient function $a_{i,j} : M(K) \ra \mathbb{C}$ as
	$$a_{i, j} (\mu) := \langle T_\mu e_{n_i}, e_{n_j}\rangle$$
	for each $\mu\in M(K)$. Hence in particular, since $T_\mu(e_{n_k})\in V_0$ for each $\mu\in M_0(H)$, $1\leq k \leq d$, we have that $$T_\mu(e_{n_k}) = \sum_{i=1}^d \langle T_\mu e_{n_k}, e_{n_i}\rangle e_{n_i} = \sum_{i=1}^d a_{k,i}(\mu) e_{n_i}. $$
	For each $\mu, \nu\in M_0(H)$, $1\leq i, j \leq d$, we further have the following equality:
	\begin{eqnarray*}
		a_{i,j}(\mu*\nu) &=& \langle T_{\mu * \nu} (e_{n_i}), e_{n_j}\rangle\\
		&=& \langle T_{\mu} \big{(}T_{\nu} (e_{n_i})\big{)}, e_{n_j}\rangle\\
		&=& \Big{\langle} T_\mu \Big{(} \sum_{k=1}^d a_{i,k}(\nu) e_{n_k}\Big{)}, e_{n_j}\Big{\rangle}\\
		&=&  \sum_{k=1}^d a_{i,k}(\nu) \Big{\langle} T_\mu {(}   e_{n_k} {)}, e_{n_j}\Big{\rangle}\\
		&=&  \sum_{k=1}^d a_{i,k}(\nu) \Big{\langle}\sum_{l=1}^d a_{k, l}(\mu)e_{n_l} , e_{n_j}\Big{\rangle}\\	
		&=& \sum_{k=1}^d \sum_{l=1}^d  a_{i,k}(\nu)a_{k, l}(\mu) {\langle}   e_{n_l} , e_{n_j}{\rangle} = \sum_{k=1}^d a_{i,k}(\nu)a_{k, j}(\mu).
	\end{eqnarray*}
	Now define a function $F: K \ra \mathbb{C}$ as $F(x) :=  \sum_{k=1}^d a_{1,k}(p_{_x})a_{k, 1}(p_{_{\widetilde{x}}})$
	for each $x\in K$. Note that since the map $y \mapsto \langle T_{y}g_1, g_2 \rangle : K \ra \mathbb{C}$ is in $C_0(K)$ for any two functions $g_1, g_2 \in L^2(K)$, we have that each $a_{i,j} \in C_0(K)$ for $1\leq i, j \leq d$. Hence in particular, $F\in C_0(K)$. 

	But we also have that $\lambda(Z(K))>0$. Hence there exists a compact subset $C$ in $Z(K)$ such that $e\in C$ and $0<\lambda(C)<\infty$. It now follows from the Steinhaus' Theorem for hypergroups \cite[Proposition 1.3.30]{BH} that there exists an open neighborhood $V$ of $e$ such that $V\subset C \subseteq Z(K)$, \textit{i.e,}  $C$ is a compact neighborhood of $e$ in $K$. Hence in particular,  Lemma \ref{lemfd} implies that $U_0 \subset C \subset Z(K)$ and hence by \Cref{subgen} we have that $H\subseteq Z(K)$. Thus for each $x\in H$, $T_x$ is an isometry on $L^2(K)$ and $p_{_x}\in M_0(H)$. Thus $F\equiv 1$ on $H$ since for each $x\in H$ we have:
	\begin{eqnarray*}
		F(x) = a_{1,1}(p_x * p_{\widetilde{x}}) &=& \langle T_{p_{_{\widetilde{x}}}*p_{_x}} e_{n_1},  e_{n_1}\rangle\\
		&=& ||T_x e_{n_1}||_{_2}^2 = ||e_{n_1}||_{_2}^2 = 1.
	\end{eqnarray*}
	Hence $H$ must be compact. It now follows immediately from Theorem \ref{main1} that $\chi_{_H}$ attains equality in Hausdorff-Young on $L^p(K)$ for each $p\in (1, 2)$.
	\qed
	
	\begin{rem}
		In the first half of the proof of the above theorem, we consider a general commutative hypergroup in $PW_H^c$, without any restriction on the centre $Z(K)$, and acquire a certain function in $C_0(K)$. 
		However, the lack of an isometry in terms of translation operators leads to the phenomenon that for a non-zero function $f\in C_c(K)$, one fails to directly provide a constant $C>0$ such that $\liminf_{x\in H}||L_xf||_{_2}\geq C$ where $H$ is a subhypergroup of $K$. Note that the proof becomes substantially simpler if we assume a priori that $\lambda(Z(K))>0$, and thereby choose $U_0\subseteq H\subseteq Z(K)$. However, the above version of the proof provides a clearer insight into the necessity of such restriction.
		
		Observe that for any hypergroup $K$, the centre $Z(K)$ is the largest locally compact group contained in $K$. In particular, the restriction $\lambda(Z(K))>0$ is equivalent to the fact that $Z(K)$ is open as well. Therefore if $\lambda(Z(K))>0$, then  $K$  is a disconnected space, unless $K=Z(K)$, i.e, $K$ is simply a locally compact group. For non-trivial examples of such occurrence, note that if $K$ is any discrete hypergroup, then $\{e\}$ is an isolated point, and hence $\lambda(Z(K))\geq \lambda(\{e\})>0$. Moreover, if $K$ is a hypergroup join $(H\wedge J)$ where $H$ is a compact group, 
		then we always have that $\lambda(Z(K))= \lambda(H)>0$. Similarly, for any two hypergroups $K_1, K_2$ such that $\lambda_{K_i}(Z(K_i))>0$ for $i=1, 2$, we have that $Z(K_1\times K_2)$ has non-zero measure.
		
		On the other hand, every disconnected hypergroup $K$ need not satisfy that $\lambda(Z(K))>0$. We briefly mention below a few of such examples (see \cite[6.13, 6.14]{RO} for details).
		\begin{ex} \label{exx}
			Consider the hypergroup $K:=(H_0\wedge J_0)$, where $H_0=([0, 1], *_1)$ given by $p_{_x} *_{_1} p_{_y}:= \frac{1}{2}p_{_{|x-y|}} + \frac{1}{2}p_{_{(1-|1-x-y|)}} $, and $J=(\{0, a\}, *_2)$ with identity $0$ given by $p_{_a} *_{_2} p_{_a} = \frac{1}{3}p_{_0} + \frac{2}{3}p_{_a}$. Then clearly, $K$ is disconnected and $Z(K)= \{0, 1\}$, which has empty interior in the locally compact space $[0,1]\cup\{a\}$, and hence has Haar-measure zero.
			
			\noi In fact, if $K=(H\wedge J)$ where $\lambda_H(Z(H))=0$  and both $H, J$ are non-trivial, then $\lambda(Z(K))=0$, although the hypergroup $K=(H\wedge J)$ is disconnected.
		\end{ex}

		\begin{ex}
			For any prime number $p$, let $\mathtt{Z}_p$ denote the compact group of $p$-adic integers, and let $\mathcal{B}\subseteq Aut(\mathtt{Z}_p)$ be the group of units in $\mathtt{Z}_p$ acting multiplicatively on $\mathtt{Z}_p$. We know that the set consisting of orbits of  $\mathtt{Z}_p$ under $\mathcal{B}$ has a natural hypergroup structure \cite[Section 8]{JE}. In fact, the resulting hypergroup $K$ can be identified with the one-point compactification $\{0, 1, 2, \ldots, \infty\}$ of $\mathbb{Z}^+$, which is totally disconnected. However, we have that $Z(K)=\{\infty\}$, which has $\lambda$-measure zero, since $K$ is not discrete \cite[Theorem 1.3.27]{BH}.
		\end{ex}
		
		\begin{ex}
			For a non-compact example, simply consider the join $K= H_0\wedge \mathbb{Z}$ or  the product\cite{JE} $K=H\times J_0$ where $H=(\mathbb{R}^+, *_H)$; $p_{_x} *_{_H} p_{_y}:= \frac{1}{2}p_{_{|x-y|}} + \frac{1}{2}p_{_{(x+y)}} $, for each $x, y \in \mathbb{R}^+$. Then $K$ is disconnected, and $Z(K)=\{(0, 0)\}$, which has $\lambda$-measure zero.
		\end{ex}

	\end{rem}
	
	In particular, as a corollary we acquire the following characterization of certain classes of commutative hypergroups 
	which follows the  basic uncertainty principle, in terms of the best constant achieved in the Hausdorff-Young inequality.
	
	\begin{cor}\label{fcor}
		Let $K$ be a commutative hypergroup such that either $K$ is compact or $\lambda(Z(K))>0$. Then $K\notin PW_H$ if and only if $K$ admits a non-trivial function $f\in L^p(K)$ such that $f$ attains equality in Hausdorff-Young, for some $p\in (1, 2)$.
	\end{cor}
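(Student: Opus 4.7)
The plan is to obtain both directions of the biconditional by invoking results already established in this section, splitting the forward implication according to which of the two standing hypotheses on $K$ is in force.

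The backward direction is immediate from Theorem \ref{main2}: any non-trivial $f\in L^p(K)$ attaining equality in the Hausdorff-Young inequality for some $p\in(1,2)$ must have both $f$ and $\widehat{f}$ compactly supported, which directly witnesses $K\notin PW_H$. This step needs neither compactness nor $\lambda(Z(K))>0$.

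For the forward direction, assume $K\notin PW_H$. If $\lambda(Z(K))>0$, then Theorem \ref{main3} applies verbatim and yields the desired non-trivial $f\in L^p(K)$ attaining equality in Hausdorff-Young for every $p\in(1,2)$. If instead $K$ is compact, I would observe that $K$ itself qualifies as a compact open subhypergroup of $K$: it is closed as the ambient space, involution-invariant, compact by hypothesis, trivially open, and closed under convolution since $\text{supp}(p_x*p_y)\subseteq K$ for every $x,y\in K$. Theorem \ref{main1} then guarantees that $\chi_K$ attains equality in Hausdorff-Young for every $p\in[1,2]$, and $\chi_K$ is non-trivial because $0<\lambda(K)<\infty$.

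There is no genuine obstacle here; the proof is a short case analysis that stitches together Theorems \ref{main1}, \ref{main2} and \ref{main3}. The only point worth flagging is that in the compact case the hypothesis $K\notin PW_H$ is in fact automatic, since $\chi_K$ itself furnishes a witness (its Fourier transform is $\lambda(K)\chi_{N(K)}$, supported on the compact set $N(K)$ by Theorem \ref{main1}). Thus the substantive content of the forward direction lies in the case $\lambda(Z(K))>0$, which is handled by Theorem \ref{main3}.
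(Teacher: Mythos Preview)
Your proposal is correct and follows essentially the same argument as the paper: the backward direction via Theorem \ref{main2}, and the forward direction by a case split invoking Theorem \ref{main3} when $\lambda(Z(K))>0$ and Theorem \ref{main1} applied to $A=K$ when $K$ is compact. Your additional observation that $K\notin PW_H$ is automatic in the compact case is a nice clarification not made explicit in the paper.
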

	
	\begin{proof}
		It follows immediately from Theorem \ref{main2} that if $f$ is a non-zero function in $L^p(K)$ such that $||\widehat{f}||_{p'} = ||f||_p$ for some $p\in (1, 2)$, then $K\in PW_H^c$. On the other hand, let $K\notin PW_H$. Then the result follows immediately from Theorem \ref{main3} if $\lambda(Z(K))>0$ and from Theorem \ref{main1} if $K$ is compact, and hence itself is a compact open subhypergroup.
	\end{proof}
	
	\begin{rem}
		Although there are examples of non-compact commutative hypergroups $K$ such that $\lambda(Z(K))=0$, where $K\notin PW_H$ and $K$ admits non-trivial $f\in L^p(K)$ that attains equality in Hausdorff-Young for each $1\leq p \leq 2$ as well (Consider $K=H_0\wedge \mathbb{Z}$ with $\chi_{_{[0, 1]}}$), it is not known yet if for any non-compact commutative hypergroup $K$ with $\lambda(Z(K))=0$, the equivalence in Corollary \ref{fcor} will be satisfied.
		
		
	\end{rem}
	
	
	\section*{Acknowledgment}
	The authors are grateful to the referee for the thoughtful remarks and suggestions, which led to overall improvement of the article. The first named author would like to gratefully acknowledge the financial support provided by the Indian Institute of Technology Kanpur, India, throughout the course of this research.

\end{document}